\newcommand{\subfiguretitle}[1]{{\scriptsize{#1}} \\}
\newcommand{\R}{\mathbb{R}}                                     
\newcommand{\pd}[2]{\frac{\partial#1}{\partial#2}}              
\newcommand{\innerprod}[2]{\left\langle #1,\, #2 \right\rangle} 
\newcommand{\ts}{\hspace*{0.1em}}                               
\providecommand{\abs}[1]{\left\lvert #1 \right\rvert}           
\providecommand{\norm}[1]{\left\lVert #1 \right\rVert}          
\newcommand\xqed[1]{\leavevmode\unskip\penalty9999 \hbox{}\nobreak\hfill \quad\hbox{#1}}
\newcommand{\exampleSymbol}{\xqed{$\triangle$}}
\DeclareMathOperator{\mspan}{span}
\newtheorem{theorem}{Theorem}[section]
\newtheorem{corollary}[theorem]{Corollary}
\newtheorem{lemma}[theorem]{Lemma}
\newtheorem{proposition}[theorem]{Proposition}
\newtheorem{definition}[theorem]{Definition}
\theoremstyle{definition}
\newtheorem{example}[theorem]{Example}
\newtheorem{remark}[theorem]{Remark}
\newtheorem{textalgorithm}[theorem]{Algorithm}
\renewcommand*\env@matrix[1][*\c@MaxMatrixCols c]{%
  \hskip -\arraycolsep
  \let\@ifnextchar\new@ifnextchar
  \array{#1}}
\def\blfootnote{\gdef\@thefnmark{}\@footnotetext}
\definecolor{boxback}{gray}{0.95}
\begin{document}

\title{Kernel-based approximation of the \\ Koopman generator and Schrödinger operator}
\author[1]{Stefan Klus}
\author[2]{Feliks Nüske}
\author[3]{Boumediene Hamzi}
\affil[1]{Department of Mathematics and Computer Science, Freie Universität Berlin, Germany}
\affil[2]{Department of Mathematics, Paderborn University, Germany}
\affil[3]{Department of Mathematics, Imperial College London, United Kingdom}

\date{}

\maketitle


\begin{abstract}
Many dimensionality and model reduction techniques rely on estimating dominant eigenfunctions of associated dynamical operators from data. Important examples include the Koopman operator and its generator, but also the Schrödinger operator. We propose a kernel-based method for the approximation of differential operators in reproducing kernel Hilbert spaces and show how eigenfunctions can be estimated by solving auxiliary matrix eigenvalue problems. The resulting algorithms are applied to molecular dynamics and quantum chemistry examples. Furthermore, we exploit that, under certain conditions, the Schrödinger operator can be transformed into a Kolmogorov backward operator corresponding to a drift-diffusion process and vice versa. This allows us to apply methods developed for the analysis of high-dimensional stochastic differential equations to quantum mechanical systems.
\end{abstract}

\section{Introduction}

The Koopman operator \cite{Ko31, LaMa94, Mezic05, BMM12} plays a central role in the global analysis of complex dynamical systems. It is, for instance, used to find conformations of molecules, coherent patterns in fluid flows, but also for prediction, stability analysis, and control \cite{MaMe16, KKS16, Kaiser17, KM18a, PK19, KHMN19}. Instead of analyzing a given finite-dimensional but highly nonlinear system directly, the underlying idea is compute an associated infinite-dimensional but linear operator \cite{BMM12}. By computing an approximation of this operator from measurement or simulation data, it is possible to extract Koopman eigenvalues, eigenfunctions, and modes. The most frequently used techniques are based on variants or generalizations of \emph{extended dynamic mode decomposition} (EDMD) \cite{WKR15, WRK15}. A reformulation of EDMD for the generator of the Koopman operator, called gEDMD, was recently proposed in \cite{KNPNCS20}. It was shown that in addition to the previously mentioned applications, the generator contains valuable information about the governing equations of a system, see also \cite{MauGon16, Kaiser17}. System identification aims at learning a preferably parsimonious model from data. That is, the learned model should comprise as few terms as possible and still have predictive power, which is typically accomplished by utilizing sparse regression techniques. One drawback of gEDMD is that it requires a set of explicitly chosen basis functions and their first- and---if the system is non-deterministic and non-reversible---second-order derivatives. Moreover, the size of the resulting matrix eigenvalue problem that needs to be solved to compute eigenvalues, eigenfunctions, and modes of the generator depends on the size of the dictionary. The goal of this paper is to derive a kernel-based method to approximate the Koopman generator from data. A kernel-based variant of EDMD has been proposed in \cite{WRK15} and generalized in \cite{KSM19}. We derive a kernel-based variant of gEDMD. Employing the well-known kernel trick, a dual eigenvalue problem whose size depends on the number of snapshots can be constructed. The resulting methods allow for implicitly infinite-dimensional feature spaces and only require partial derivatives of the kernel function. This enables us to apply the methods to high-dimensional systems for which conventional techniques would be prohibitively expensive due to the curse of dimensionality, provided the number of snapshots is such that the eigenvalue problem can still be solved numerically or can be downsampled without losing essential information. Since we aim at approximating differential operators, we need to be able to represent derivatives in reproducing kernel Hilbert spaces. This requires the notion of derivative reproducing properties. Derivative reproducing kernels\footnote{Reproducing kernel Hilbert spaces with derivative reproducing properties are related to \emph{Native Spaces} introduced in a different context in \cite{wendland_2004}.} \cite{Zhou08} have been used to approximate Lyapunov functions for ordinary differential equations in \cite{lyap_bh} and to approximate center manifolds for ordinary differential equations in \cite{kernels_center_manifolds}.

Similar operators are also used for manifold learning and understanding the geometry of high-dimensional data \cite{Coifman06, NLCK06, CKLMN08, NLCK08}. Methods like \emph{diffusion maps} construct graph Laplacians with the aid of diffusion kernels, effectively approximating transition probabilities between data points. In the infinite-data limit and letting the kernel bandwidth go to zero, it has been shown that these methods, depending on the normalization, essentially compute eigenfunctions of certain differential operators, e.g., the Laplace--Beltrami operator, the Kolmogorov backward operator, or the Fokker--Planck operator.

Another related differential operator that is of utmost importance in quantum mechanics is the Schrödinger operator. Solutions of the time-independent Schrödinger equation describe stationary states and associated energy levels. We will illustrate how kernel-based methods developed for the Koopman generator can be applied to these related problems. The main contributions of this paper are:
\begin{itemize}[wide, itemindent=\parindent, itemsep=0ex, topsep=0.5ex]
\item We show how the derivative reproducing properties of kernels can be used to approximate differential operators such as the Koopman generator and the Schödinger operator as well as their eigenvalues and eigenfunctions from data. Additionally, we derive a kernel-based method tailored to reversible dynamics, which does not require estimating drift and diffusion terms but only an equilibrated trajectory.
\item  Furthermore, we exploit the fact that, under certain conditions, the Schrödinger operator can be turned into a Kolmogorov backward operator (see, e.g., \cite{Pav14}), which allows for the interpretation of a quantum-mechanical system as a drift-diffusion process and, as a consequence, the application of methods developed for the analysis of stochastic differential equations or their generators.
\item We demonstrate potential applications in molecular dynamics, using the example of a quadruple-well problem, and quantum mechanics, describing how to apply the proposed methods directly to the Schrödinger equation or the associated stochastic process. This will be illustrated with two well-known examples, the quantum harmonic oscillator and the hydrogen atom.
\end{itemize}

The remainder of the manuscript is structured as follows: We first introduce the necessary tools, namely the Koopman operator, its generator, and (derivative) reproducing kernel Hilbert spaces in Section~\ref{sec:Koopman and RKHS}. Additionally, relationships with the Schrödinger equation will be explored. The derivation of the kernel-based formulation of gEDMD will be detailed in Section~\ref{sec:kgEDMD}. In Section~\ref{sec:Applications}, we will show how the derived methods can be applied to molecular dynamics and quantum mechanics problems. Concluding remarks and future work will be discussed in Section~\ref{sec:Conclusion}.

\section{Koopman theory and reproducing kernel Hilbert spaces}
\label{sec:Koopman and RKHS}

We start directly with the non-deterministic setting, the Koopman operator and its generator for ordinary differential equations can then be regarded as a special case, see also \cite{KNPNCS20} for a detailed comparison. The notation used below is summarized in Table~\ref{tab:Notation}.

\begin{table}[tb]
    \centering
    \caption{Overview of notation.}
    \begin{tabular}{ll}
        \hline
        $ X_t $                            & stochastic process \\
        $ \mathbb{X}  $                    & state space \\
        $ k, \phi $                        & kernel and associated feature map \\
        $ \mathbb{H} $                     & reproducing kernel Hilbert space induced by $ k $ \\
        $ \mathcal{K}^t $                  & Koopman operator with lag time $ t $ \\
        $ \mathcal{L} $                    & generator of the Koopman operator \\
        $ \mathcal{H} $                    & Schrödinger operator \\
        $ \mathcal{T} $                    & general differential operator \\
        $ \mathcal{T}_\mathbb{H} $         & kernel-based differential operator \\
        $ \mathcal{C}_{00} $               & covariance operator \\
        $ \widehat{\mathcal{A}} $          & empirical estimate of operator $ \mathcal{A} $ \\
        $ G_0, G_1, G_2 $                  & (generalizations of) Gram matrices \\
        \hline
    \end{tabular}
    \label{tab:Notation}
\end{table}

\subsection{The Koopman operator and its generator}

In what follows, let $ \mathbb{X} \subset \R^d $ be the state space and $ f \colon \mathbb{X} \to \R $ a real-valued observable of the system. Furthermore, let $ \mathbb{E}[\,\cdot\,] $ denote the expected value
and $ \Theta^t $ the flow map associated with a dynamical system, i.e., $ \Theta^t(X_0) = X_t $. Given a stochastic differential equation of the form
\begin{equation} \label{eq:SDE}
    \mathrm{d}X_t = b(X_t) \ts \mathrm{d}t + \sigma(X_t) \ts \mathrm{d}B_t,
\end{equation}
where $ b \colon \R^d \to \R^d $ is called the drift term, $ \sigma \colon \R^d \to \R^{d \times d} $ the diffusion term, and $ B_t $ is $ d $-dimensional Brownian motion, the stochastic Koopman operator is defined by
\begin{equation*} 
    (\mathcal{K}^t f)(x) = \mathbb{E}[f(\Theta^t(x))].
\end{equation*}
The infinitesimal generator $ \mathcal{L} $ of the semigroup of Koopman operators is given by
\begin{equation} \label{eq:generator_SDE}
    \mathcal{L} f = \sum_{i=1}^d b_i \ts \pd{f}{x_i} + \frac{1}{2} \sum_{i=1}^d \sum_{j=1}^d a_{ij} \ts \pd{^2 f}{x_i \ts \partial x_j}
\end{equation}
and its adjoint, the generator of the Perron--Frobenius operator, by
\begin{equation*}
    \mathcal{L}^* f = -\sum_{i=1}^d \pd{(b_i \ts f)}{x_i}  + \frac{1}{2} \sum_{i=1}^d \sum_{j=1}^d \pd{^2 (a_{ij} \ts f)}{x_i \ts \partial x_j},
\end{equation*}
with $ a = \sigma \ts \sigma^\top $. We assume from now on that $a$ is uniformly positive definite on $\mathbb{X}$. The second-order partial differential equation $ \pd{u}{t} = \mathcal{L} u $ is also called the \emph{Kolmogorov backward equation} and $ \pd{u}{t} = \mathcal{L}^* u $ the \emph{Fokker--Planck equation}~\cite{LaMa94}.

\begin{remark} \label{rem:overdamped_langevin}
As in \cite{KNPNCS20}, we will often consider systems of the form
\begin{equation*}
    \mathrm{d}X_t = -\nabla V(X_t) \ts \mathrm{d}t + \sqrt{2 \beta^{-1}} \ts \mathrm{d}B_t,
\end{equation*}
where $ V $ is a given potential and $ \beta $ the inverse temperature. In this case, the operators can be written as
\begin{equation*}
    \mathcal{L} f = -\nabla V \cdot \nabla f + \beta^{-1} \Delta f
    \quad \text{and} \quad
    \mathcal{L}^* f = \nabla V \cdot \nabla f + \Delta V \ts f + \beta^{-1} \Delta f.
\end{equation*}
\end{remark}

\subsection{Generator EDMD}

A data-driven method for the approximation of the generator of the Koopman operator and Perron--Frobenius operator called \emph{generator extended dynamic mode decomposition} or, in short, gEDMD was derived in \cite{KNPNCS20}. While standard EDMD requires a training data set $ \{\ts x_m \ts\}_{m=1}^M $ and the corresponding data points $\{\ts y_m \ts\}_{m=1}^M $, where $ y_m = \Theta^\tau(x_m) $ for a fixed lag time $ \tau $, gEDMD assumes that we can evaluate or estimate\footnote{Using, for instance, Kramers--Moyal formulae.} $ \{\ts b(x_m) \ts\}_{m=1}^M $ and $ \{\ts \sigma(x_m) \ts\}_{m=1}^M $. Choosing a dictionary of basis functions $ \{\ts \phi_n \ts\}_{n=1}^N $, where $ \phi_n \colon \R^d \to \R $, and defining $ \phi(x) = [\phi_1(x), \dots, \phi_N(x)]^\top $, we compute the matrices $ \Phi_X, \mathrm{d}\Phi_X \in \R^{N \times M} $, with
\begin{equation*}
    \Phi_X =
    \begin{bmatrix}
        \phi_1(x_1) & \dots  & \phi_1(x_M) \\
        \vdots      & \ddots & \vdots      \\
        \phi_N(x_1) & \dots  & \phi_N(x_M)
    \end{bmatrix}
    \quad \text{and} \quad
    \mathrm{d}\Phi_X =
    \begin{bmatrix}
        \mathrm{d}\phi_1(x_1) & \dots  & \mathrm{d}\phi_1(x_M) \\
        \vdots                & \ddots & \vdots                \\
        \mathrm{d}\phi_N(x_1) & \dots  & \mathrm{d}\phi_N(x_M)
    \end{bmatrix},
\end{equation*}
where
\begin{equation*}
    \mathrm{d}\phi_n(x) = \sum_{i=1}^d b_i(x) \ts \pd{\phi_n}{x_i}(x) + \frac{1}{2} \sum_{i=1}^d \sum_{j=1}^d a_{ij}(x) \ts \pd{^2 \phi_n}{x_i \ts \partial x_j}(x).
\end{equation*}
The matrix representation of the least-squares approximation of the Koopman generator $ \mathcal{L} $ is then given by
\begin{equation*}
    \widehat{L}^\top = \mathrm{d}\Phi_X \Phi_X^+ = \widehat{A} \ts \widehat{G}^+,
\end{equation*}
with
\begin{equation*}
    \widehat{A} = \frac{1}{M} \sum_{m=1}^M \mathrm{d}\phi(x_m) \ts \phi(x_m)^\top
    \quad \text{and} \quad
    \widehat{G} = \frac{1}{M} \sum_{m=1}^M \phi(x_m) \ts \phi(x_m)^\top.
\end{equation*}
It was shown that gEDMD, in the infinite-data limit, converges to a Galerkin projection of the generator onto the space spanned by the basis functions $ \{\ts \phi_n \ts\}_{n=1}^N $ and that $ \widehat{L} $ is an empirical estimate of the projected generator \cite{KNPNCS20}. Approximations of eigenfunctions of $ \mathcal{L} $ are then given by
\begin{equation*}
    \varphi_\ell(x) = \innerprod{\xi_\ell}{\phi(x)},
\end{equation*}
where $ \xi_\ell $ is an eigenvector of $ \widehat{L} $ corresponding to the eigenvalue $ \lambda_\ell $, and $\innerprod{\cdot}{\cdot} $ denotes the standard Euclidean inner product. Analogously, the generator of the Perron--Frobenius operator is given by $ (\widehat{L}^*)^\top = \widehat{A}\ts^\top \widehat{G}^{+} $. Further details, examples, and different applications including system identification, coarse graining, and control can also be found in \cite{KNPNCS20}.

\subsection{Second-order differential operators}
\label{sec:Dynamical operators}

Consider the generator $\mathcal{L}$ in \eqref{eq:generator_SDE}, and assume there is a unique strictly positive invariant density $\rho_0$, which we can write as $\rho_0(x) \propto \exp(-F(x))$. The function $F$ is called a generalized potential (with $F = \beta V$ for the SDE in Remark \ref{rem:overdamped_langevin}). The measure corresponding to $\rho_0$ is denoted by $ \mathrm{d} \mu = \rho_0 \ts \mathrm{d} x $. The negative generator can be decomposed into a symmetric and an anti-symmetric part as
\begin{align} \label{eq:generator_decomposition}
-\mathcal{L} &= -\frac{1}{2}e^{F} \nabla \cdot \left(e^{-F} a \nabla \cdot \right) + J \cdot \nabla = \mathcal{S} + \mathcal{A}, \\
J &= \frac{1}{2}e^F \nabla \cdot(e^{-F} a) - b,
\end{align}
see \cite{Pav14}. The vector field $J$ is called stationary probability flow. In the form \eqref{eq:generator_decomposition}, $-\mathcal{L}$ is a special case of an elliptic second order differential operator on $L^2_\mu$, given by
\begin{equation} \label{eq:def_second_order_op}
    \mathcal{T} = -\frac{1}{2}e^F \nabla \cdot \left( e^{-F} a \nabla \cdot \right) + J \cdot \nabla + W,
\end{equation}
for scalar functions $F,\, W$, a uniformly positive definite matrix field $a$, and a vector field $J$. 

\begin{remark}
Because of the general form of \eqref{eq:def_second_order_op}, we avoid making too many assumptions about the coefficients of $\mathcal{T}$ or its domain of definition. The goal is to derive numerical algorithms using a minimal set of assumptions. A detailed analysis of the interplay between domains and properties of the RKHS will be carried out in future publications.
\end{remark}

If $F \equiv 0$, we obtain generalized Schrödinger operators as another special case, i.e.,
\begin{equation}
\label{eq:def_schroedinger_op}
\mathcal{H} = -\frac{1}{2} \nabla\cdot \left( a\nabla \cdot \right) + J \cdot \nabla + W,
\end{equation}
with $W$ called the potential energy in quantum mechanics. In particular, with the reduced Planck constant $ \hbar $ and the mass $ \mathbf{m} $, setting $ a \equiv \frac{\hbar^2}{\mathbf{m}} I $ and $ J \equiv 0 $ leads to the Hamiltonian $ \mathcal{H} = - \frac{\hbar^2}{2 \mathbf{m}} \Delta + W$ of the time-independent Schrödinger equation in quantum mechanics:
\begin{equation} \label{eq:TISE}
    \mathcal{H} \psi = E \psi.
\end{equation}
We note for later use that, under certain conditions, Schrödinger operators and Koopman generators are equivalent, see, e.g., \cite[Chapter 4.9]{Pav14}. For the sake of completeness, the proof is shown in Appendix~\ref{sec:Proofs}.

\begin{lemma} \label{lem:KBE_SE}
The ergodic generator $-\mathcal{L}$ with unique positive invariant density $\rho_0 \propto \exp(-F)$ is unitarily equivalent to the Schrödinger operator $\mathcal{H}$ in \eqref{eq:def_schroedinger_op} on $L^2$, with $J$ remaining unchanged and $W$ given by:
\begin{align*}
W &= -\frac{1}{4} \nabla \cdot(a \nabla F) + \frac{1}{8} \nabla F^\top a \nabla F + \frac{1}{2} J \cdot \nabla F.
\end{align*}
The function $e^{-\frac{1}{2}F}$ is an eigenfunction of $\mathcal{H}$ with eigenvalue zero. Conversely, let $\mathcal{H}$ be as in \eqref{eq:def_schroedinger_op}, and assume there is a non-degenerate smallest eigenvalue $E_0$ with strictly positive real eigenfunction $\psi_0 = \exp(-\eta)$. Then $\mathcal{H}$ is unitarily equivalent to a negative ergodic generator $-\mathcal{L}$ on $L^2_\mu$, where $\rho_0 \propto \exp(-2\eta)$ is the density associated with $ \mu $, and $\rho_0$ is invariant for the corresponding SDE. The explicit form of $-\mathcal{L}$ is given by
\begin{equation*}
-\mathcal{L} = e^\eta\left[\mathcal{H} - E_0 \right] (e^{-\eta} \cdot) = -\frac{1}{2}e^{2\eta} \nabla \cdot \left(e^{-2\eta} a \nabla \cdot \right) +  J \cdot \nabla.
\end{equation*}
\end{lemma}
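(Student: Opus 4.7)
The natural strategy is to exhibit an explicit unitary intertwiner between the two operators, namely the multiplication operator $U\colon L^2_\mu \to L^2$, $Uf = e^{-F/2}f$. A one-line change-of-variables shows $\|Uf\|_{L^2}^2 = \int e^{-F}|f|^2\,\mathrm{d}x = \|f\|_{L^2_\mu}^2$, so $U$ is unitary with $U^{-1}g = e^{F/2}g$. The whole lemma then boils down to computing $U(-\mathcal{L})U^{-1}\psi = e^{-F/2}(-\mathcal{L})(e^{F/2}\psi)$ and reading off the coefficients.

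For the forward direction I would exploit the decomposition $-\mathcal{L} = \mathcal{S} + \mathcal{A}$ from \eqref{eq:generator_decomposition}. The antisymmetric piece is easy: $e^{-F/2}\mathcal{A}(e^{F/2}\psi) = J\cdot\nabla\psi + \tfrac{1}{2}(J\cdot\nabla F)\psi$. For the symmetric piece I would apply the product rule twice, first to $\nabla(e^{F/2}\psi)$, then to $\nabla\cdot[e^{-F/2}(\,\cdot\,)]$, using symmetry of $a$ so the cross terms $\tfrac14 \nabla F \cdot a\nabla\psi$ and $-\tfrac14 \nabla\psi \cdot a\nabla F$ cancel. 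The surviving pieces are exactly $-\tfrac{1}{2}\nabla\cdot(a\nabla\psi)$ plus the multiplication operator $-\tfrac14 \nabla\cdot(a\nabla F)\psi + \tfrac18 \nabla F^\top a\nabla F\,\psi$. Adding the contribution from $\mathcal{A}$ gives precisely the claimed $W$, so $U(-\mathcal{L})U^{-1} = \mathcal{H}$. The zero-eigenfunction statement is then immediate: constants lie in the kernel of $\mathcal{L}$, so $\mathcal{H}(e^{-F/2}) = U(-\mathcal{L})U^{-1}(U\cdot 1) = 0$.

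For the converse, with $F := 2\eta$ and $U = e^{-\eta}$, the strategy is to compute $e^{\eta}\mathcal{H}(e^{-\eta}\psi)$ by the same mechanical expansion. The resulting expression splits into a pure second-order-plus-first-order part $-\tfrac{1}{2}\nabla\cdot(a\nabla\psi) + (a\nabla\eta + J)\cdot\nabla\psi$ and a zero-order multiplication by
\begin{equation*}
    -\tfrac{1}{2}\nabla\eta^\top a\nabla\eta + \tfrac{1}{2}\nabla\cdot(a\nabla\eta) - J\cdot\nabla\eta + W.
\end{equation*}
The hypothesis $\mathcal{H}\psi_0 = E_0\psi_0$ with $\psi_0 = e^{-\eta}$ forces this zero-order coefficient to equal $E_0$; this is the crucial step where the ground-state assumption enters. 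Subtracting $E_0$ therefore kills the multiplication term, and one rewrites $-\tfrac{1}{2}\nabla\cdot(a\nabla\psi) + a\nabla\eta\cdot\nabla\psi$ as $-\tfrac{1}{2}e^{2\eta}\nabla\cdot(e^{-2\eta}a\nabla\psi)$ via a single product-rule identity, recovering the stated form of $-\mathcal{L}$.

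Finally, to justify that this $-\mathcal{L}$ really is a Koopman generator for an SDE with invariant density $\rho_0 \propto e^{-2\eta}$, I would read off the drift from the matching with \eqref{eq:generator_decomposition}, namely $b = \tfrac{1}{2}e^{2\eta}\nabla\cdot(e^{-2\eta}a) - J$, and verify $\mathcal{L}^*\rho_0 = 0$ directly, which follows because the symmetric part $\mathcal{S}$ was designed to have $\rho_0$ in its kernel and $J\cdot\nabla$ is skew-adjoint with respect to $\mu$. The main obstacle is purely bookkeeping: keeping track of signs, using the symmetry $v^\top a w = w^\top a v$ at the right moments, and correctly identifying which exponential-of-$F$ prefactor gets absorbed into which derivative when rewriting the weighted divergence. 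No analytic subtleties beyond the standing assumption that $a$ is uniformly positive definite and that $\rho_0$ (respectively $\psi_0$) is strictly positive and sufficiently smooth for the computations to make classical sense.
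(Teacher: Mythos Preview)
Your proposal is correct and follows essentially the same approach as the paper: conjugate by the multiplication operator $e^{-F/2}$ (respectively $e^{-\eta}$), split off the $J$-part first, and expand the symmetric divergence-form piece by repeated product rule, using the eigenvalue equation for $\psi_0$ in the converse direction to kill the zero-order term. Your write-up is in fact slightly more complete than the paper's, since you explicitly note why $U$ is unitary, derive the zero-eigenfunction claim from $\mathcal{L}1=0$, and sketch the invariance check $\mathcal{L}^*\rho_0=0$, none of which the paper's proof spells out.
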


\begin{corollary} \label{corr:KBE_SE}
Applying Lemma~\ref{lem:KBE_SE} to \eqref{eq:TISE}, we have
\begin{align*}
    \frac{1}{\psi_0}(\mathcal{H} - E_0)(\psi_0 f)
        = - \left(-\frac{\hbar^2}{\mathbf{m}} \nabla \eta \cdot \nabla f + \frac{\hbar^2}{2 \mathbf{m}} \Delta f \right)
        = - \mathcal{L} f,
\end{align*}
where $ \mathcal{L} $ is the Koopman generator of a drift-diffusion process (see Remark \ref{rem:overdamped_langevin}) with potential (up to an additive constant)
\begin{equation*}
    V(x) = \frac{\hbar^2}{\mathbf{m}} \eta(x),
\end{equation*}
and temperature $ \beta^{-1} = \frac{\hbar^2}{2 \mathbf{m}} $.
\end{corollary}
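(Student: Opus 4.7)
The corollary is a direct specialization of Lemma \ref{lem:KBE_SE}, so the plan is largely mechanical. I would start by identifying the setting: the time-independent Schrödinger operator $\mathcal{H} = -\frac{\hbar^2}{2\mathbf{m}}\Delta + W$ is the instance of \eqref{eq:def_schroedinger_op} with $a \equiv \frac{\hbar^2}{\mathbf{m}} I$ and $J \equiv 0$. Under the standing assumption that there is a smallest eigenvalue $E_0$ with strictly positive eigenfunction $\psi_0 = e^{-\eta}$, Lemma \ref{lem:KBE_SE} applies and yields
\begin{equation*}
    -\mathcal{L}f \;=\; e^{\eta}\bigl[\mathcal{H}-E_0\bigr]\bigl(e^{-\eta} f\bigr) \;=\; -\tfrac{1}{2}e^{2\eta}\,\nabla \cdot \bigl(e^{-2\eta}\, a\,\nabla f\bigr),
\end{equation*}
since the $J\cdot\nabla$ term vanishes.

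Next, I would substitute $a = \frac{\hbar^2}{\mathbf{m}}I$ and expand the divergence using the product rule. Pulling out the constant and using $\nabla e^{-2\eta} = -2 e^{-2\eta}\nabla\eta$, a one-line computation gives
\begin{equation*}
    -\mathcal{L}f \;=\; -\frac{\hbar^2}{2\mathbf{m}}\, e^{2\eta}\, \nabla \cdot \bigl(e^{-2\eta}\nabla f\bigr) \;=\; \frac{\hbar^2}{\mathbf{m}}\nabla\eta \cdot \nabla f \;-\; \frac{\hbar^2}{2\mathbf{m}}\Delta f,
\end{equation*}
which is exactly the intermediate expression appearing in the statement.

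Finally, I would match this against the overdamped Langevin generator in Remark \ref{rem:overdamped_langevin}, $\mathcal{L}f = -\nabla V \cdot \nabla f + \beta^{-1}\Delta f$. Reading off coefficients, the drift term forces $\nabla V = \frac{\hbar^2}{\mathbf{m}}\nabla\eta$, so $V = \frac{\hbar^2}{\mathbf{m}}\eta$ up to an additive constant (which is irrelevant since $\mathcal{L}$ depends only on $\nabla V$); the diffusion coefficient gives $\beta^{-1} = \frac{\hbar^2}{2\mathbf{m}}$.

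There is no real obstacle in this argument; the corollary is essentially a bookkeeping exercise once Lemma \ref{lem:KBE_SE} is granted. The only point requiring mild care is the substitution $F \mapsto 2\eta$ when moving from the general lemma (whose invariant density is $\rho_0 \propto e^{-F}$) to the corollary (where $\psi_0 = e^{-\eta}$ and the invariant density of the associated SDE is $\rho_0 \propto e^{-2\eta} = \psi_0^2$), together with the correct application of the product rule in the divergence expansion.
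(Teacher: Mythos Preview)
Your proposal is correct and follows exactly the approach implicit in the paper: the corollary is stated there without a separate proof, as a direct specialization of Lemma~\ref{lem:KBE_SE} with $a \equiv \tfrac{\hbar^2}{\mathbf{m}} I$ and $J \equiv 0$, followed by the routine product-rule expansion and identification with the overdamped Langevin generator. There is nothing to add.
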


We will exploit this duality below to apply methods developed for the Koopman operator or generator to the Schrödinger operator. More details on quantum chemistry in general and also the quantum harmonic oscillator and the hydrogen atom studied in Section~\ref{sec:Applications} can be found, e.g., in \cite{Levine2000}.

\subsection{Reproducing kernel Hilbert spaces and derivative reproducing properties}

We aim at representing the differential operators introduced above in reproducing kernel Hilbert spaces.

\begin{definition}
Let $ \mathbb{X} $ be a set and $ \mathbb{H} $ a space of functions $ f \colon \mathbb{X} \to \R $. Then $ \mathbb{H} $ is called a \emph{reproducing kernel Hilbert space} (RKHS) with inner product $ \innerprod{\cdot}{\cdot}_\mathbb{H} $ if a function $ k \colon \mathbb{X} \times \mathbb{X} \to \R $ exists such that
\begin{enumerate}[label=(\roman*), itemsep=0ex, topsep=1ex]
\item $ \innerprod{f}{k(x, \cdot)}_\mathbb{H} = f(x) $ for all $ f \in \mathbb{H} $, and
\item $ \mathbb{H} = \overline{\mspan\{k(x, \cdot) \mid x \in \mathbb{X} \}} $.
\end{enumerate}
\end{definition}

The function $ k $ is called \emph{kernel}. It was shown that every RKHS has a unique symmetric positive definite\footnote{Here, we use the terms \emph{positive definite} and \emph{strictly positive definite}, i.e., positive definite means that $ \sum_{r=1}^M \sum_{s=1}^M \gamma_r \gamma_s \ts k(x_r, x_s) \ge 0 $ for all $ M \in \mathbb{N} $, $ \gamma_1, \dots, \gamma_M \in \R $, and $ x_1, \dots, x_M \in \mathbb{X} $.} reproducing kernel and that, conversely, every symmetric positive definite kernel spans a unique RKHS, see \cite{Aronszajn50,Schoe01, Steinwart2008:SVM}. Frequently used kernels include the polynomial kernel and the Gaussian kernel, given by
\begin{equation*}
    k(x, x^\prime) = (c + x^\top x^\prime)^q
    \quad \text{and} \quad
    k(x, x^\prime) = \exp\left(-\frac{\norm{x-x^\prime}^2}{2 \ts \varsigma^2}\right),
\end{equation*}
respectively. Here, $ q \in \mathbb{N} $ is the degree of the polynomial kernel, $ c \ge 0 $ a parameter, and $ \varsigma $ the bandwidth of the Gaussian kernel. We now introduce the partial derivative reproducing properties of RKHSs~\cite{Zhou08}. Let $ \alpha = (\alpha_1, \dots, \alpha_d) \in \mathbb{N}_0^d $ be a multi-index and $ \abs{\alpha} = \sum_{i=1}^d \alpha_i $. Furthermore, for a fixed $ p \in \mathbb{N}_0 $, we define the index set $ I_p = \{ \alpha \in \mathbb{N}_0^d: \abs{\alpha} \le p \} $. Given $ f \colon \mathbb{X} \to \R $, let $ D^\alpha $ denote the partial derivative---assuming it exists---
\begin{equation*}
    D^\alpha f = \pd{^{\abs{\alpha}}}{x_1^{\alpha_1} \dots \ts \partial x_d^{\alpha_d}} f.
\end{equation*}
Thus, the $ i $th entry of the gradient is given by $ D^{e_i} f $ and the $ (i,j)$th entry of the Hessian by $ D^{e_i + e_j} $, where $ e_i $ and $ e_j $ are the $ i $th and $ j $th unit vector, respectively. When we apply the differential operator $ D^\alpha $ to the kernel $ k $, the multi-index $ \alpha $ is assumed to be embedded into $ \mathbb{N}_0^{2 \ts d} $ by adding zeros, i.e., the derivatives are computed with respect to the first argument of the kernel. Also when we write $ \nabla k(x, x^\prime) $, the gradient is computed with respect to $ x $. In what follows, let $ k(x, \cdot) = \phi(x) $, where $ \phi $ is the canonical feature space mapping.

\begin{theorem}[\cite{Zhou08}]
Given $ p \in \mathbb{N}_0 $ and a positive definite kernel $ k \colon \mathbb{X} \times \mathbb{X} \to \R $ with $ k \in C^{2 \ts p}(\mathbb{X} \times \mathbb{X}) $, the following holds:
\begin{enumerate}[label=(\roman*), itemsep=0ex, topsep=1ex]
\item $ D^\alpha k(x, \cdot) \in \mathbb{H} $ for any $ x \in \mathbb{X} $ and $ \alpha \in I_p $.
\item $ (D^\alpha f)(x) = \innerprod{D^\alpha k(x, \cdot)}{f}_{\mathbb{H}} $ for any $ x \in \mathbb{X} $, $ f \in \mathbb{H} $, and $ \alpha \in I_p $.
\end{enumerate}
\end{theorem}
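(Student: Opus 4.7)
My plan is to proceed by induction on $|\alpha|$, combining the Riesz representation theorem with a difference-quotient argument that lifts the reproducing identity from $k(x,\cdot)$ to each of its partial derivatives. The base case $\alpha = 0$ is the defining reproducing property, so nothing needs to be shown. For the inductive step, I assume (i) and (ii) hold for all multi-indices of order strictly less than $|\alpha|$, write $\alpha = \beta + e_i$ with $|\beta| = |\alpha| - 1 \le p-1$, and introduce
\[
q_h \;:=\; \tfrac{1}{h} \bigl[ D^\beta k(x + h e_i, \cdot) - D^\beta k(x, \cdot) \bigr] \;\in\; \mathbb{H},
\]
which lies in $\mathbb{H}$ by the inductive version of~(i).

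The core step is to show that $\{q_h\}$ is Cauchy in $\mathbb{H}$-norm as $h \to 0$. Applying (ii) at level $\beta$ to the element $g := D^\beta k(y, \cdot) \in \mathbb{H}$ (and differentiating once more in the free slot) yields the crucial identity
\[
\innerprod{D^\beta k(y, \cdot)}{D^\beta k(z, \cdot)}_\mathbb{H} \;=\; (D_1^\beta D_2^\beta k)(y, z),
\]
where the subscripts indicate differentiation in the first and second argument of $k$. Expanding $\norm{q_h - q_{h'}}_\mathbb{H}^2 = \innerprod{q_h}{q_h}_\mathbb{H} - 2\innerprod{q_h}{q_{h'}}_\mathbb{H} + \innerprod{q_{h'}}{q_{h'}}_\mathbb{H}$ via this identity produces a combination of mixed second-order difference quotients of $D_1^\beta D_2^\beta k$ at points near $(x, x)$. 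Because $k \in C^{2p}(\mathbb{X} \times \mathbb{X})$ and $|\alpha| \le p$, the derivative $D_1^\alpha D_2^\alpha k = D_1^{e_i} D_2^{e_i}(D_1^\beta D_2^\beta k)$ exists and is jointly continuous, and a standard Taylor expansion shows each of the three difference quotients tends to the common limit $(D_1^\alpha D_2^\alpha k)(x, x)$. Hence $\norm{q_h - q_{h'}}_\mathbb{H}^2 \to 0$, and completeness of $\mathbb{H}$ yields a limit $q \in \mathbb{H}$.

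To identify $q$ and conclude (ii), I pair against the reproducing kernels $k(y, \cdot)$: continuity of $\innerprod{\cdot}{\cdot}_\mathbb{H}$, the inductive hypothesis, and the $C^{2p}$-regularity of $k$ give
\[
q(y) = \innerprod{q}{k(y, \cdot)}_\mathbb{H} = \lim_{h \to 0} \tfrac{1}{h}\bigl[ (D_1^\beta k)(x + h e_i, y) - (D_1^\beta k)(x, y) \bigr] = (D_1^\alpha k)(x, y),
\]
so $q = D^\alpha k(x, \cdot)$, which establishes (i). For arbitrary $f \in \mathbb{H}$, the same Cauchy convergence together with the inductive form of (ii) applied to $f$ converts $\innerprod{q_h}{f}_\mathbb{H}$ into the pointwise difference quotient of $D^\beta f$ at $x$ in direction $e_i$; passing to the limit yields both the existence of $(D^\alpha f)(x)$ and the formula claimed in~(ii).

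The main obstacle is the Cauchy estimate for $\{q_h\}$: expressing the $\mathbb{H}$-norm of differences of sections $D^\beta k(\cdot, \cdot)$ through mixed partials of $k$ and extracting convergence from a second-order finite difference is precisely where the hypothesis $k \in C^{2p}$ is consumed---one loses $|\alpha|$ derivatives on each side of $k$, so regularity $2|\alpha| \le 2p$ is exactly sufficient. Everything else (symmetry and continuity of the inner product, the reproducing property on $k(y, \cdot)$, and completeness of $\mathbb{H}$) is standard RKHS machinery.
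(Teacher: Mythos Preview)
The paper does not supply its own proof of this theorem: it is quoted verbatim as a result from \cite{Zhou08} and used as a black box thereafter, so there is nothing in the paper to compare your argument against.

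That said, your proof sketch is correct and is essentially the argument given by Zhou. The induction on $|\alpha|$, the difference-quotient element $q_h$, the identity $\innerprod{D^\beta k(y,\cdot)}{D^\beta k(z,\cdot)}_\mathbb{H} = (D_1^\beta D_2^\beta k)(y,z)$, and the Cauchy estimate via second-order finite differences of $D_1^\beta D_2^\beta k$ are exactly the ingredients Zhou uses, and your accounting of where the $C^{2p}$ hypothesis is spent is accurate. One small wording issue: the parenthetical ``and differentiating once more in the free slot'' when deriving $\innerprod{D^\beta k(y,\cdot)}{D^\beta k(z,\cdot)}_\mathbb{H} = (D_1^\beta D_2^\beta k)(y,z)$ is slightly misleading---that identity follows immediately from applying the inductive form of~(ii) with $f = D^\beta k(z,\cdot)$ at the point $y$, since the $D^\beta$ on $f$ then acts on the second argument of $k$. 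No extra differentiation is needed beyond what~(ii) already supplies.
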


The second property is called \emph{derivative reproducing property}. For $ p = 0 $, this reduces to the standard reproducing property of RKHSs.

\begin{example} Let us consider the two aforementioned kernels:
\begin{enumerate}[wide, itemindent=\parindent, itemsep=0ex, topsep=0.5ex]
\item For the polynomial kernel, we obtain
\begin{equation*}
    D^{e_i} k(x, x^\prime) = q \ts x^\prime_i (c + x^\top x^\prime)^{q-1}
    \quad \text{and} \quad
    D^{e_i + e_j} k(x, x^\prime) = q \ts (q-1) \ts x^\prime_i \ts x^\prime_j (c + x^\top x^\prime)^{q-2}.
\end{equation*}
\hspace*{\itemindent plus 2ex} Thus, $ \nabla \ts k(x, x^\prime) = q \ts x^\prime \ts (c + x^\top x^\prime)^{q-1} $ and $ \nabla^2 \ts k(x, x^\prime) = q \ts (q-1) \ts x^\prime \ts x^{\prime\top} (c + x^\top x^\prime)^{q-2} $.
\item Similarly, for the Gaussian kernel, this results in
\begin{align*}
    D^{e_i} k(x, x^\prime) &= -\frac{1}{\varsigma^2} (x_i - x^\prime_i) \ts k(x, x^\prime), \\
    D^{e_i + e_j} k(x, x^\prime) &=
    \begin{cases}
        \bigg[\dfrac{1}{\varsigma^4}(x_i - x^\prime_i)^2 - \dfrac{1}{\varsigma^2}\bigg] k(x, x^\prime), & i = j, \\[2ex]
        \dfrac{1}{\varsigma^4}(x_i - x^\prime_i) (x_j - x^\prime_j) \ts k(x, x^\prime), & i \ne j,
    \end{cases}
\end{align*}
\hspace*{\itemindent plus 2ex} $ \nabla \ts k(x, x^\prime) = -\frac{1}{\varsigma^2} (x - x^\prime) \ts k(x, x^\prime) $, and $ \nabla^2 \ts k(x, x^\prime) = \Big[\frac{1}{\varsigma^4}(x - x^\prime)(x - x^\prime)^\top - \frac{1}{\varsigma^2} I \Big] k(x, x^\prime) $. \exampleSymbol
\end{enumerate}
\end{example}

For the numerical experiments below, we will mainly use the Gaussian kernel\footnote{To get error estimates, it might be more convenient to use Wendland functions \cite{wendland_2004}. We leave the formal analysis of the methods developed in this paper for future work.}.

\section{Kernel-based representation of differential operators}
\label{sec:kgEDMD}

In this section, we introduce the Galerkin projection of the differential operators discussed above onto the RKHS, including the Koopman generator and Schrödinger operator. We then move on to show how these projected operators can be estimated from data.

\subsection{Galerkin projection of operators}

Let $\mu$ denote a probability measure on the state space $\mathbb{X}$, with density $\rho_0 \propto e^{-F}$ for a generalized potential $F$.

\begin{definition}
\label{def:rkhs_operators}
We define the \emph{covariance operator} $ \mathcal{C}_{00} \colon \mathbb{H} \to \mathbb{H} $ by
\begin{equation}
\label{eq:def_cov_op}
    \mathcal{C}_{00} = \int \phi(x) \otimes \phi(x) \ts \mathrm{d} \mu(x),
\end{equation}
and an operator $ \mathcal{T}_{\mathbb{H}} \colon \mathbb{H} \to \mathbb{H} $ by
\begin{equation}
\label{eq:def_rkhs_op}
\begin{split}
    \mathcal{T}_{\mathbb{H}} &= \int \phi(x) \otimes \left[ - \frac{1}{2}\sum_{i=1}^d \sum_{j=1}^d a_{ij}(x) D^{e_i + e_j} \phi(x) \right] \ts \mathrm{d} \mu(x) \\
    & \quad + \int \phi(x) \otimes \left[\sum_{i=1}^d \left(J_i(x) - \frac{1}{2} e^{F(x)} \nabla \cdot (e^{-F(x)}a_{:, i}(x))\right) D^{e_i}\phi(x) \right] \ts \mathrm{d} \mu(x) \\
    & \quad + \int W(x) \phi(x) \otimes \phi(x)  \ts \mathrm{d} \mu(x).
\end{split}
\end{equation}
If $J \equiv 0$, we define $\mathcal{T}_\mathbb{H}$ by
\begin{equation}
\label{eq:def_rkhs_sym}
    \mathcal{T}_{\mathbb{H}} = \int \left[\frac{1}{2} \sum_{i=1}^d \sum_{j=1}^d a_{ij}(x) \left(D^{e_i} \phi(x) \otimes D^{e_j} \phi(x)\right)\right] + W(x) \phi(x)\otimes \phi(x)  \,\mathrm{d} \mu(x).
\end{equation}
\end{definition}

The operator $ \mathcal{C}_{00} $ is the standard covariance operator $ \mathcal{C}_{\scriptscriptstyle X\!X} $, see \cite{Baker70, Baker73}, the operator $ \mathcal{T}_{\mathbb{H}} $ mimics the action of the bilinear form $\innerprod{\mathcal{T}f}{g}_\mu$ on the RKHS. It plays the same role as the cross-covariance operator $ \mathcal{C}_{\scriptscriptstyle X\!Y} $ for the Koopman operator in \cite{KSM19}. The form of the symmetric operator for $J \equiv 0$ is motivated by the symmetry of $\mathcal{T}$, and that, at least formally
\begin{equation*}
\innerprod{\mathcal{T}f}{g}_\mu = \int \left[\frac{1}{2} \nabla f(x)^\top a(x) \nabla g(x) \right] + W(x) f(x) g(x) \,\mathrm{d} \mu(x),
\end{equation*}
see also \cite{DAVIES1996}.

\begin{lemma} \label{lem:RKHS Galerkin}
Assume that $\mathbb{H} \subset \mathcal{D}(\mathcal{T})$, and that all terms appearing under the integral signs in \eqref{eq:def_cov_op} and \eqref{eq:def_rkhs_op} (or \eqref{eq:def_rkhs_sym}) are in $L^1_\mu$ as bounded operators on $\mathbb{H}$, that is
\begin{align}
\label{eq:l1_conditions_th_1}
& \int |a_{ij}(x)| \|D^{e_i + e_j} \phi(x)\|_{\mathbb{H}} \|\phi(x)\|_\mathbb{H}  \ts \mathrm{d} \mu(x) < \infty, \\
\label{eq:l1_conditions_th_2}
& \int \left(|J_i(x)| + \frac{1}{2} e^{F(x)} |\nabla \cdot (e^{-F(x)}a_{:, i}(x))| \right) \|D^{e_i}\phi(x)\|_\mathbb{H} \|\phi(x)\|_\mathbb{H} \ts \mathrm{d} \mu(x) < \infty,  \\
\label{eq:l1_conditions_th_3}
&\int  |W(x)| \|\phi(x)\|_\mathbb{H} \|\phi(x)\|_\mathbb{H}  \ts \mathrm{d} \mu(x) < \infty, \\
\label{eq:l1_conditions_th_4}
&\int \|\phi(x)\|_\mathbb{H} \|\phi(x)\|_\mathbb{H}  \ts \mathrm{d} \mu(x) < \infty.
\end{align}
Then, for all $ f, g \in \mathbb{H} $, 
\begin{align*}
\innerprod{\mathcal{T}f}{g}_\mu &= \innerprod{\mathcal{T}_\mathbb{H}f}{g}_\mathbb{H}, & \innerprod{f}{g}_\mu &= \innerprod{\mathcal{C}_{00}f}{g}_\mathbb{H}.
\end{align*}
\end{lemma}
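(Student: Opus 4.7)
The plan is to verify both identities by rewriting the $L^2_\mu$-pairings through the reproducing and derivative reproducing properties and then swapping the $\mu$-integral with the Hilbert space inner product via Fubini, viewing the integrands as Bochner integrals of rank-one operators on $\mathbb{H}$. For the covariance identity this is immediate: applying the reproducing property twice gives $f(x) g(x) = \innerprod{(\phi(x) \otimes \phi(x)) f}{g}_\mathbb{H}$, and condition \eqref{eq:l1_conditions_th_4} makes $\phi(x) \otimes \phi(x)$ Bochner integrable as a bounded operator on $\mathbb{H}$, so pulling the $\mu$-integral inside the inner product produces $\innerprod{f}{g}_\mu = \innerprod{\mathcal{C}_{00} f}{g}_\mathbb{H}$.

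For the general $\mathcal{T}_\mathbb{H}$ in \eqref{eq:def_rkhs_op}, I would first expand the principal part $-\tfrac{1}{2} e^F \nabla \cdot (e^{-F} a \nabla f)$ in non-divergence form via the product rule; this produces pointwise a linear combination of $D^{e_i+e_j}f$, $D^{e_i}f$, and $f$ whose coefficients are precisely the functions appearing inside the integrand of \eqref{eq:def_rkhs_op}. The derivative reproducing property then identifies $\mathcal{T} f(x) = \innerprod{v(x)}{f}_\mathbb{H}$ for an explicit $v(x) \in \mathbb{H}$, and multiplying by $g(x) = \innerprod{\phi(x)}{g}_\mathbb{H}$ gives $\mathcal{T} f(x)\, g(x) = \innerprod{(\phi(x) \otimes v(x)) f}{g}_\mathbb{H}$. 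Conditions \eqref{eq:l1_conditions_th_1}--\eqref{eq:l1_conditions_th_3} furnish the termwise absolute integrability needed to exchange the $\mu$-integral with the inner product, yielding $\innerprod{\mathcal{T} f}{g}_\mu = \innerprod{\mathcal{T}_\mathbb{H} f}{g}_\mathbb{H}$.

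The symmetric form \eqref{eq:def_rkhs_sym} for $J \equiv 0$ requires one additional step. I would first integrate by parts against $\mathrm{d}\mu = e^{-F}\,\mathrm{d}x$: the factor $e^F$ in front of the divergence cancels $e^{-F}$ from the density, and transferring the divergence onto $g$ turns the principal part into $\tfrac{1}{2} \sum_{i,j} a_{ij}\, \partial_i f\, \partial_j g$. Writing $\partial_i f(x)\, \partial_j g(x) = \innerprod{(D^{e_j}\phi(x) \otimes D^{e_i}\phi(x)) f}{g}_\mathbb{H}$ and $f(x) g(x) = \innerprod{(\phi(x) \otimes \phi(x)) f}{g}_\mathbb{H}$, and then interchanging integral and inner product once more (using symmetry of $a$), gives $\innerprod{\mathcal{T} f}{g}_\mu = \innerprod{\mathcal{T}_\mathbb{H} f}{g}_\mathbb{H}$ with $\mathcal{T}_\mathbb{H}$ as in \eqref{eq:def_rkhs_sym}.

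The step I expect to be most delicate is this integration by parts. The lemma imposes $L^1_\mu$-type bounds on the various coefficient--derivative products but says nothing about boundary conditions or whether the boundary flux $g\, (e^{-F} a \nabla f) \cdot n$ vanishes on $\partial \mathbb{X}$. In the absence of a specification of $\mathcal{D}(\mathcal{T})$ and of the behavior of elements of $\mathbb{H}$ at $\partial \mathbb{X}$ (or at infinity when $\mathbb{X} = \R^d$), this step is formal, and the remark following \eqref{eq:def_second_order_op} makes clear that such a careful analysis of domains is deliberately postponed. In the proof I would therefore simply assume the boundary contribution vanishes and flag the assumption explicitly; away from this caveat, both identities reduce to the same two-step pattern of reproducing/derivative reproducing substitution followed by Fubini.
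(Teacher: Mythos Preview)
Your proposal is correct and follows essentially the same route as the paper: expand $\mathcal{T}$ in non-divergence form, apply the (derivative) reproducing property to write $(\mathcal{T}f)(x)\,g(x)$ as $\innerprod{(\phi(x)\otimes v(x))f}{g}_\mathbb{H}$, and use the $L^1_\mu$ bounds to interchange integral and inner product. The paper's proof in the appendix does exactly this, and for the symmetric case it simply says ``the same argument can be used''; your observation that this step hides a formal integration by parts (with unaddressed boundary terms) is accurate and is precisely what the paper acknowledges with the phrase ``at least formally'' preceding Definition~\ref{def:rkhs_operators}.
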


The proof can be found in Appendix~\ref{sec:Proofs}. It uses the derivative reproducing properties and the definition of rank-one operators. Note that
\begin{align*}
    D^{e_i} f(x) \ts g(x)
        &= \innerprod{D^{e_i} \phi(x)}{f}_\mathbb{H} \innerprod{\phi(x)}{g}_\mathbb{H} \\
        &= \innerprod{D^{e_i} \phi(x) \otimes \phi(x)}{f \otimes g}_{\mathbb{H}\otimes \mathbb{H}} \\
        &= \innerprod{(\phi(x) \otimes D^{e_i} \phi(x)) f}{g}_\mathbb{H}.
\end{align*}

\begin{lemma}
\label{lem:rep_L_H}
Assume that $ \mathcal{T} f \in \mathbb{H} $ for all $ f \in \mathbb{H} $, then $ \mathcal{T}_{\mathbb{H}} f = \mathcal{C}_{00} \mathcal{T} f $.
\end{lemma}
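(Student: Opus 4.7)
The plan is to deduce this from Lemma~\ref{lem:RKHS Galerkin} by a simple pairing argument: express both sides as weak equalities tested against arbitrary $g \in \mathbb{H}$, and then invoke non-degeneracy of the RKHS inner product.

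Concretely, I would fix an arbitrary $g \in \mathbb{H}$ and, for $f \in \mathbb{H}$, rewrite $\innerprod{\mathcal{T}_{\mathbb{H}} f}{g}_{\mathbb{H}}$ using the first identity of Lemma~\ref{lem:RKHS Galerkin} as $\innerprod{\mathcal{T} f}{g}_{\mu}$. The hypothesis $\mathcal{T} f \in \mathbb{H}$ is what makes the next step legal: since both $\mathcal{T} f$ and $g$ now lie in $\mathbb{H}$, the second identity of Lemma~\ref{lem:RKHS Galerkin} (with $\mathcal{T} f$ playing the role of the first argument) converts $\innerprod{\mathcal{T} f}{g}_{\mu}$ into $\innerprod{\mathcal{C}_{00} \mathcal{T} f}{g}_{\mathbb{H}}$. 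Chaining these gives
\begin{equation*}
\innerprod{\mathcal{T}_{\mathbb{H}} f}{g}_{\mathbb{H}} = \innerprod{\mathcal{C}_{00} \mathcal{T} f}{g}_{\mathbb{H}} \qquad \text{for all } g \in \mathbb{H},
\end{equation*}
and the conclusion $\mathcal{T}_{\mathbb{H}} f = \mathcal{C}_{00} \mathcal{T} f$ follows because the RKHS inner product is non-degenerate.

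There is essentially no obstacle beyond verifying that Lemma~\ref{lem:RKHS Galerkin} is applicable, i.e., that the integrability conditions \eqref{eq:l1_conditions_th_1}--\eqref{eq:l1_conditions_th_4} hold under the standing hypothesis; this is implicit, since without these conditions even the operator $\mathcal{T}_{\mathbb{H}}$ is not well-defined by Definition~\ref{def:rkhs_operators}. The only subtlety worth flagging is directional: the equality is asserted pointwise in $f$, not as an operator identity, because the hypothesis $\mathcal{T} f \in \mathbb{H}$ need only be checked on the elements at which one wishes to evaluate. If one additionally has $\mathcal{T}(\mathbb{H}) \subset \mathbb{H}$ as a whole, then the identity lifts to $\mathcal{T}_{\mathbb{H}} = \mathcal{C}_{00} \mathcal{T}|_{\mathbb{H}}$ on $\mathbb{H}$, which is the form that will be useful later for relating eigenvalue problems for $\mathcal{T}_\mathbb{H}$ to Galerkin projections of $\mathcal{T}$.
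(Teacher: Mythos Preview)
Your proposal is correct and is essentially the same argument as the paper's: both apply Lemma~\ref{lem:RKHS Galerkin} twice (once for $\mathcal{T}_\mathbb{H}$ and once for $\mathcal{C}_{00}$ with $\mathcal{T}f \in \mathbb{H}$ in the first slot) to obtain $\innerprod{\mathcal{T}_\mathbb{H} f}{g}_\mathbb{H} = \innerprod{\mathcal{T}f}{g}_\mu = \innerprod{\mathcal{C}_{00}\mathcal{T}f}{g}_\mathbb{H}$ for all $g \in \mathbb{H}$, and then conclude by non-degeneracy. The paper merely writes the chain starting from the $\mathcal{C}_{00}\mathcal{T}f$ side rather than the $\mathcal{T}_\mathbb{H} f$ side.
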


\begin{proof}
The proof is similar to the one for the corresponding result for kernel transfer operators, see \cite{KSM19}. With the previous lemma, we obtain
\begin{align*}
    \innerprod{\mathcal{C}_{00} \mathcal{T} f}{g}_\mathbb{H} &= \mathbb{E}^\mu[ (\mathcal{T} f)(x) \ts g(x) ] \\
    &= \int (\mathcal{T}f)(x) g(x) \ts \mathrm{d} \mu(x) \\
    &= \innerprod{\mathcal{T}_\mathbb{H} f}{g}_\mathbb{H}
\end{align*}
for arbitrary $ g \in \mathbb{H} $.
\end{proof}

If the assumptions of Lemma \ref{lem:rep_L_H} are satisfied and the operator $\mathcal{C}_{00}$ is invertible, the RKHS operators defined above can be used to compute exact eigenfunctions of $\mathcal{T}$. Indeed, if $\varphi$ is a solution of
\begin{equation*}
    \mathcal{T}_{\mathbb{H}} \varphi = \mathcal{C}_{00} \mathcal{T} \varphi = \lambda \ts \mathcal{C}_{00} \varphi,
\end{equation*}
then multiplying this equation by $\mathcal{C}_{00}^{-1}$ shows that $\varphi$ is also an eigenfunction for $\mathcal{T}$. A typical approach to circumvent the potential nonexistence of the inverse of the covariance operator is to consider a regularized version $ \mathcal{T}_\varepsilon = (\mathcal{C}_{00} + \varepsilon I)^{-1} \mathcal{T}_\mathbb{H} $ for a regularization parameter $ \varepsilon $. However, the assumptions of Lemma \ref{lem:rep_L_H} are strong and may be hard to verify in practice. But in any case, Lemma \ref{lem:RKHS Galerkin} shows that the operators defined in Definition~\ref{def:rkhs_operators} provide a Galerkin approximation of the full operator in the RKHS $\mathbb{H}$.

\subsection{Empirical estimates}
\label{subsec:empirical_estimates}

The next step is to derive empirical estimates of the operators defined above. Given training data $ \{\ts x_m\ts\}_{m=1}^M $, sampling the probability distribution $ \mu $, we define $ \Phi = [\phi(x_1), \dots, \phi(x_M)] $ and $ \mathrm{d}\Phi = [\mathrm{d}\phi(x_1), \dots, \mathrm{d}\phi(x_M)] $, where
\begin{align*}
    \mathrm{d}\phi(x_m) &= -\frac{1}{2} \sum_{i=1}^d \sum_{j=1}^d a_{ij}(x_m) D^{e_i + e_j} \phi(x_m) \\
    &\quad + \sum_{i=1}^d \left[J_i(x_m) - \frac{1}{2} \sum_{j=1}^d e^{F(x_m)} \frac{\partial}{\partial x_j}(e^{-F(x_m)}a_{ji}(x_m))\right] D^{e_i}\phi(x_m) \\ &\quad+ W(x_m) \phi(x_m).
\end{align*}
If $\mathcal{T}$ is the generator of an SDE with invariant measure $\mu$, the data can also be obtained by integrating the stochastic dynamics with initial condition drawn from $\mu$. We see that $ \Phi $ is the standard feature map and $ \mathrm{d}\Phi $ contains the action of the differential operator. The empirical estimates of the operators $ \mathcal{C}_{00} $ and $\mathcal{T}_{\mathbb{H}}$ are then given by the following expressions:
\begin{alignat*}{4}
    \widehat{\mathcal{C}}_{00}
        &= \frac{1}{M} \sum_{m=1}^M \phi(x_m) \otimes \phi(x_m)
        &&= \frac{1}{M} \Phi \Phi^\top, \\
    \widehat{\mathcal{T}}_{\mathbb{H}}
        &= \frac{1}{M} \sum_{m=1}^M \phi(x_m) \otimes \mathrm{d}\phi(x_m)
        &&= \frac{1}{M} \Phi \ts \mathrm{d}\Phi^\top.
\end{alignat*}
Note that these are still finite-rank operators on the full RKHS $\mathbb{H}$. For the symmetric RKHS operator $\mathcal{T}_\mathbb{H}$, we need to define the empirical estimate in a slightly different way. Decompose the positive definite matrix $a(x_m) = \sigma(x_m) \sigma(x_m)^\top$. With
\begin{equation*}
    \mathrm{d}\phi_l(x_m) = \sum_{i=1}^d \sigma_{il}(x_m) D^{e_i}\phi(x_m)= \left[\nabla \phi(x_m)^\top \sigma_l(x_m)\right],
\end{equation*}
where $\sigma_l$ is the $l$th column of $\sigma$, the empirical RKHS operator becomes
\begin{equation*}
    \widehat{\mathcal{T}}_{\mathbb{H}} = \frac{1}{2M}\sum_{m=1}^M \sum_{l=1}^d \mathrm{d}\phi_l(x_m) \otimes \mathrm{d}\phi_l(x_m) + \frac{1}{M}\sum_{m=1}^M W(x_m) \phi(x_m)\otimes \phi(x_m).
\end{equation*}

\begin{remark}
If the feature space associated with the kernel $ k $ is finite-dimensional and known explicitly, i.e., $ \phi(x) = [\phi_1(x), \dots, \phi_N(x)]^\top $ and $ k(x, x^\prime) = \innerprod{\phi(x)}{\phi(x^\prime)} $, then for the Koopman generator we obtain gEDMD as a special case, with $ \widehat{\mathcal{C}}_{00} = \widehat{G} $ and $ \widehat{\mathcal{T}}_{\mathbb{H}} = -\widehat{A}^{\ts\top} $. However, the goal is to rewrite gEDMD in such a way that only kernel evaluations are required since $ \phi $ can potentially be infinite-dimensional and might only be defined implicitly.
\end{remark}

\subsection{Weak formulation and numerical algorithm}

With Lemma \ref{lem:RKHS Galerkin} in mind, we now proceed to the weak formulation of the eigenvalue problem for the operator $\mathcal{T}$. We then define the quadratic forms
\begin{alignat*}{4}
\mathcal{Q}(f, g) &= \innerprod{\mathcal{T}f}{g}_\mu, && \quad f,g\in \mathcal{D}_\mathcal{Q}, &  \mathcal{S}(f, g) &= \innerprod{f}{g}_\mu, && \quad f,g \in L^2_\mu, \\
\mathcal{Q}_\mathbb{H}(f, g) &= \innerprod{\mathcal{T}_\mathbb{H}f}{g}_\mathbb{H}, && \quad f,g \in \mathbb{H}, & \mathcal{S}_\mathbb{H}(f, g) &= \innerprod{\mathcal{C}_{00}f}{g}_\mathbb{H}, && \quad f,g \in \mathbb{H}, \\
\widehat{\mathcal{Q}}_\mathbb{H}(f, g) &= \innerprod{\widehat{\mathcal{T}}_\mathbb{H}f}{g}_\mathbb{H}, && \quad f,g \in \mathbb{H}, & \qquad \widehat{\mathcal{S}}_\mathbb{H}(f, g) &= \innerprod{\widehat{\mathcal{C}}_{00}f}{g}_\mathbb{H}, && \quad f,g \in \mathbb{H},
\end{alignat*}
where $\mathcal{D}_\mathcal{Q}$ is the domain of the quadratic form $\mathcal{Q}$. We consider the weak eigenvalue problems
\begin{alignat}{2}
\label{eq:weak_ev_generator}
\mathcal{Q}(f_n, g) &= \lambda_n \mathcal{S}(f_n, g) && \quad \forall g \in \mathcal{D}_\mathcal{Q}, \\
\label{eq:weak_ev_rkhs}
\mathcal{Q}_\mathbb{H}(\widetilde{f}_n, g) &= \widetilde{\lambda}_n \mathcal{S}_\mathbb{H}(\widetilde{f}_n, g) && \quad \forall g\in \mathbb{H}, \\
\label{eq:weak_ev_data}
\widehat{\mathcal{Q}}_\mathbb{H}(\widehat{f}_n, g) &= \widehat{\lambda}_n \widehat{\mathcal{S}}_\mathbb{H}(\widehat{f}_n, g) && \quad \forall g\in \mathbb{H}.
\end{alignat}
We will now rewrite \eqref{eq:weak_ev_data} in such a way that only kernel evaluations---in the form of Gram matrices---are required. The derivation is similar to the kernel transfer operator counterpart in \cite{KSM19}, but we now need to consider derivatives at the training data points instead of the time-lagged variables. We start by restricting \eqref{eq:weak_ev_data} to the finite-dimensional space $ \mathbb{H}^M = \mathrm{span}\{\phi(x_m) \}_{m=1}^M $, which we assume to be $ M $-dimensional. Elements of this space are of the form $ f = \Phi \ts u $  for some vector $ u \in \R^m $. We examine the quadratic forms $ \widehat{\mathcal{Q}}_\mathbb{H} $ and $ \widehat{\mathcal{S}}_\mathbb{H} $ on this space.

\begin{lemma} \label{lem:EVP}
A solution of the problem $ \widehat{\mathcal{Q}}_\mathbb{H}(f, g) = \widehat{\lambda} \ts \widehat{\mathcal{S}}_\mathbb{H}(f, g) $ is given by $ f = \Phi \ts u $, where $ u $ is a solution of one of the following generalized eigenvalue problems:
\begin{enumerate}[label=(\roman*), itemsep=0ex, topsep=1ex]
\item In the general case, $ u $ solves $ G_{2} \ts u = \widehat{\lambda} \ts G_{0} \ts u $, where the entries of the matrices $ G_2 $ and $ G_0 $ are given by
\begin{align*}
    \big[G_{2}\big]_{mr} &= \left[\mathrm{d}\phi(x_m)\right](x_r), & \big[G_{0}\big]_{mr} &= \left[\phi(x_m)\right](x_r).
\end{align*}
\item Analogously, for the symmetric case, we obtain $ \frac{1}{2} \sum_{l=1}^d G_{1}^{(l)} \ts G_{1}^{(l)} \ts u = \widehat{\lambda} \ts G_{0} \ts G_{0} \ts u $, where we define
\begin{equation*}
    \Big[G_{1}^{(l)} \ts \Big]_{mr} = \sigma_{l}(x_m)^\top \nabla k(x_m, x_r)
\end{equation*}
and $ \sigma_{l}(x_m) $ is the $l$th column of the matrix $ \sigma(x_m) $.
\end{enumerate}
\end{lemma}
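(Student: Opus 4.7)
The plan is to substitute the ansatz $f = \Phi u$ and $g = \Phi v$ into the finite-dimensional weak problem on $\mathbb{H}^M$, and then translate each bilinear form into a matrix expression on $\mathbb{R}^M$ by invoking the reproducing and derivative reproducing properties of $k$. The common right-hand side is handled first: since $\widehat{\mathcal{C}}_{00} = \tfrac{1}{M}\Phi \Phi^\top$, one has $\widehat{\mathcal{C}}_{00}(\Phi u) = \tfrac{1}{M}\Phi(\Phi^\top \Phi)u$, and $[\Phi^\top \Phi]_{mr} = \innerprod{\phi(x_m)}{\phi(x_r)}_\mathbb{H} = k(x_m,x_r) = [G_0]_{mr}$. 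Pairing with $\Phi v$ gives $\widehat{\mathcal{S}}_\mathbb{H}(\Phi u, \Phi v) = \tfrac{1}{M}\, v^\top G_0 G_0\, u$.

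For part (i), the same matrix manipulation applied to $\widehat{\mathcal{T}}_\mathbb{H} = \tfrac{1}{M}\Phi (\mathrm{d}\Phi)^\top$ requires the entries of $(\mathrm{d}\Phi)^\top \Phi$, which are $\innerprod{\mathrm{d}\phi(x_m)}{\phi(x_r)}_\mathbb{H}$. Since $\mathrm{d}\phi(x_m)$ is a finite linear combination of terms $D^\alpha\phi(x_m)$ with coefficients depending only on $x_m$, the derivative reproducing property yields $\innerprod{D^\alpha \phi(x_m)}{\phi(x_r)}_\mathbb{H} = [D^\alpha \phi(x_m)](x_r)$, i.e., the $\alpha$-derivative of the kernel in its first argument evaluated at $(x_m,x_r)$. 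Recombining coefficients recovers $[\mathrm{d}\phi(x_m)](x_r) = [G_2]_{mr}$, so $\widehat{\mathcal{Q}}_\mathbb{H}(\Phi u, \Phi v) = \tfrac{1}{M}\, v^\top G_0 G_2\, u$, and since $v \in \mathbb{R}^M$ is arbitrary the weak equation collapses to $G_0 G_2\, u = \widehat{\lambda}\, G_0^2\, u$. Cancelling the common $G_0$, which is invertible by the standing assumption that $\mathbb{H}^M$ is $M$-dimensional, yields $G_2\, u = \widehat{\lambda}\, G_0\, u$.

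For part (ii), I would expand the symmetric $\widehat{\mathcal{T}}_\mathbb{H}$ as a sum over $l, m$ of rank-one operators $\mathrm{d}\phi_l(x_m) \otimes \mathrm{d}\phi_l(x_m)$, so that the bilinear form factors into products $\innerprod{\mathrm{d}\phi_l(x_m)}{\Phi u}_\mathbb{H}\, \innerprod{\mathrm{d}\phi_l(x_m)}{\Phi v}_\mathbb{H}$. Using $\mathrm{d}\phi_l(x_m) = \sum_i \sigma_{il}(x_m)\, D^{e_i}\phi(x_m)$ and the first-order derivative reproducing property, each pairing reduces to $\sum_r u_r\, \sigma_l(x_m)^\top \nabla k(x_m, x_r) = [G_1^{(l)} u]_m$. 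Summing over $m, l$ and equating coefficients of $v$ produces the stated generalized eigenvalue problem, with the same $G_0^2$ on the right coming from the covariance side.

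The derivation is essentially the kernel trick applied to two quadratic forms, so no deep argument is needed. The main care is a bookkeeping one: keeping the tensor-product convention $(a \otimes b)c = \innerprod{b}{c}_\mathbb{H}\, a$ consistent with the matrix shorthand $\Phi(\mathrm{d}\Phi)^\top$, and cleanly separating function evaluation of elements of $\mathbb{H}$ from differentiation of $k$ in its first argument. Once these conventions are fixed, both cases fall out from a single pattern of computation.
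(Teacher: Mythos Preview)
Your proposal is correct and follows essentially the same approach as the paper: both substitute $f = \Phi u$, $g = \Phi v$, use the (derivative) reproducing property to identify the inner products $\innerprod{\mathrm{d}\phi(x_m)}{\phi(x_r)}_\mathbb{H} = [\mathrm{d}\phi(x_m)](x_r)$ and $\innerprod{\phi(x_m)}{\phi(x_r)}_\mathbb{H} = k(x_m,x_r)$, obtain $\widehat{\mathcal{Q}}_\mathbb{H}(f,g) = \tfrac{1}{M}\innerprod{G_2 u}{G_0 v}$ and $\widehat{\mathcal{S}}_\mathbb{H}(f,g) = \tfrac{1}{M}\innerprod{G_0 u}{G_0 v}$, and then cancel the common invertible $G_0$; the symmetric case is handled identically in both. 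Your use of the compact matrix shorthand $\Phi(\mathrm{d}\Phi)^\top$ versus the paper's fully expanded triple sums is a purely cosmetic difference.
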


The proofs are shown in Appendix~\ref{sec:Proofs}. Since $ \left[\phi(x_m)\right](x_r) = k(x_m, x_r) $, $ G_0 $ is the standard Gram matrix. The reversible case requires only first-order derivatives of the kernel. Furthermore, only trajectory data sampled from the invariant distribution $ \mu $ and estimates of the diffusion term $ \sigma $ are needed. For typical problems, $ \sigma $ is constant and not position-dependent. As a result, the diffusion term needs to be estimated only once or might even be known. For molecular dynamics problems, for instance, it is proportional to the square root of the temperature. The overall approach is summarized in the following algorithm. Note that it is not a direct kernelization of gEDMD, but an extension that approximates the Koopman generator as a special case.

\begin{textalgorithm} \label{alg:kgEDMD}
The final numerical algorithm can be summarized as follows:
\begin{enumerate}[itemsep=0ex, topsep=0.5ex]
\item Choose a kernel $ k $ and compute all its required derivatives, either analytically or with the aid of automatic differentiation.
\item Assemble the Gram matrices $ G_{2} $ and $ G_{0} $ or, if the system is symmetric, $ G_{1}^{(l)} $, for $ l = 1, \dots, d $, and $ G_{0} $.
\item Solve the corresponding eigenvalue problem described in Lemma~\ref{lem:EVP} to obtain an eigenvector~$ u $.
\item An eigenfunction is then given by $ \varphi = \Phi \ts u $.
\end{enumerate}
\end{textalgorithm}

The two main steps of the algorithm are assembling the Gram matrices and solving the generalized eigenvalue problem. Since the size of the eigenvalue problem depends on the number of data points, the cost is cubic in $ M $. This is a drawback of many kernel-based methods. The efficient approximation of solutions to this eigenvalue problem for large data sets will be considered in future work.

\subsection{Analysis}

In this section, we provide some preliminary analysis of the methods introduced above. The first result concerns the convergence of the empirical estimates.

\begin{lemma}
As $M \rightarrow \infty$, the empirical estimates defined in Section \ref{subsec:empirical_estimates} converge to the corresponding RKHS operators in Definition~\ref{def:rkhs_operators} with respect to the operator norm for almost all data sequences $ \{ \ts x_m \ts \}_{m=1}^M$, if the data were generated either as i.i.d.\ samples from $ \mu $, or by integrating a stochastic dynamics which is ergodic with respect to $ \mu $.
\end{lemma}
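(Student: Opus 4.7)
The plan is to recognize that both empirical estimates are finite averages of rank-one operators of the form $\phi(x_m) \otimes \psi(x_m)$, where $\psi(x_m) \in \{\phi(x_m),\, \mathrm{d}\phi(x_m)\}$ in the general case, or a sum of such products built from $\phi(x_m)$, $\mathrm{d}\phi_l(x_m)$, and $W(x_m)\phi(x_m)$ in the symmetric case. The target operators in Definition~\ref{def:rkhs_operators} are the Bochner integrals of the same integrands against $\mu$, so the convergence statement reduces to a strong law of large numbers for operator-valued random variables.

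First, I would work in the separable Hilbert space $\mathrm{HS}(\mathbb{H})$ of Hilbert--Schmidt operators on $\mathbb{H}$, equipped with its HS inner product. Each rank-one operator $\phi(x) \otimes \psi(x)$ lies in $\mathrm{HS}(\mathbb{H})$ with HS norm equal to $\|\phi(x)\|_\mathbb{H} \|\psi(x)\|_\mathbb{H}$, and the HS norm dominates the operator norm. The integrability conditions \eqref{eq:l1_conditions_th_1}--\eqref{eq:l1_conditions_th_4} used in Lemma~\ref{lem:RKHS Galerkin} are precisely what ensures that these HS-valued maps are Bochner integrable with respect to $\mu$, so the expectations exist in $\mathrm{HS}(\mathbb{H})$ and, by Lemma~\ref{lem:RKHS Galerkin}, agree with $\mathcal{C}_{00}$ and $\mathcal{T}_\mathbb{H}$.

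Next, I would apply the appropriate strong law. In the i.i.d.\ case, Mourier's SLLN for separable Hilbert-space-valued random variables gives
\begin{equation*}
    \frac{1}{M}\sum_{m=1}^M \phi(x_m)\otimes\psi(x_m) \;\longrightarrow\; \mathbb{E}^\mu[\phi(x)\otimes\psi(x)]
\end{equation*}
almost surely in HS norm, and a fortiori in operator norm. In the ergodic case, I would invoke Birkhoff's ergodic theorem for Banach-space-valued observables (which applies since $\mathrm{HS}(\mathbb{H})$ is separable and the integrand lies in $L^1_\mu$) to obtain the same conclusion for almost every trajectory. The symmetric case follows by linearity, treating $\widehat{\mathcal{T}}_\mathbb{H}$ as a finite sum of empirical averages of HS-valued random operators, each handled separately.

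The main technical obstacle is ensuring Bochner measurability of $x \mapsto \phi(x) \otimes \psi(x)$ as a map into $\mathrm{HS}(\mathbb{H})$, which requires strong continuity (or at least separability of the range) of the maps $x \mapsto \phi(x)$ and $x \mapsto \mathrm{d}\phi(x)$. For the smooth kernels considered here (Gaussian and polynomial) and under the standing integrability assumptions carried over from Lemma~\ref{lem:RKHS Galerkin}, this is routine: $\phi$ and its derivatives are continuous on $\mathbb{X}$, and $\mathbb{H}$ generated by such kernels is separable. Writing this out carefully, together with any uniform moment bounds needed for the ergodic version of the SLLN, is where a formal proof would spend most of its effort; quantitative rates and a refined analysis of the underlying function-space assumptions are deferred, consistent with the authors' stated plan.
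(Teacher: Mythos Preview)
Your proposal is correct and follows essentially the same approach as the paper. The paper's proof is a terse one-liner invoking the integrability conditions of Lemma~\ref{lem:RKHS Galerkin} together with Birkhoff's individual ergodic theorem for Banach-space valued functions; your write-up is a more detailed unpacking of exactly that argument, with the additional (and harmless) distinction of citing Mourier's SLLN separately for the i.i.d.\ case rather than subsuming it under the ergodic theorem.
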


\begin{proof}
The statement follows from ergodicity of the underlying dynamics, the integrability conditions in Lemma~\ref{lem:RKHS Galerkin}, and the Birkhoff individual ergodic theorem for Banach space valued functions \cite{CHACON1962}.
\end{proof}

Next, we generalize \cite[Theorem 7]{RBD10} to obtain convergence rates on the empirical estimates for i.i.d.\ data:

\begin{lemma} \label{lem:cov_conc}
Assume that (\ref{eq:l1_conditions_th_1}-- \ref{eq:l1_conditions_th_4}) hold. Then:
\begin{itemize}\itemsep 0pt
\item[(i)] The operators $C_{00}$, $\widehat{C}_{00}$, $\mathcal{T}_\mathbb{H}$ and $\hat{\mathcal{T}}_\mathbb{H}$ are Hilbert--Schmidt.
\item[(ii)] Let $\delta\in(0,1]$. Assume the coefficients of the operator $\mathcal{T}$ are all globally bounded, and let $\sup_{x\in \mathbb{X}} D^\alpha k(x, x) < \infty $ for all $|\alpha| \leq 4$ ($|\alpha| \leq 2$ in the symmetric case). If the data are drawn i.i.d.\ from the distribution $\mu$, then there are constants $\kappa_0, \kappa_1$ such that with probability at least $1-\delta$,
\begin{align*}
\| \mathcal{C}_{00}-\widehat{C}_{00}\|_{HS} &\leq \frac{2\kappa_0 \sqrt{2}}{\sqrt{M}}\log^{1/2}\frac{2}{\delta}, &
\| \mathcal{T}_{\mathbb{H}}-\widehat{\mathcal{T}}_{\mathbb{H}}\|_{HS} &\leq \frac{2\kappa_1 \sqrt{2}}{\sqrt{M}}\log^{1/2}\frac{2}{\delta},
\end{align*}
where the $\norm{\cdot}_{\mbox{HS}}$ is the Hilbert--Schmidt norm.
\end{itemize}
\end{lemma}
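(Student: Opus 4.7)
The plan is to treat both parts through the lens of Hilbert--Schmidt operators as a Hilbert space in their own right, exploiting the rank-one identity $\|u \otimes v\|_{HS} = \|u\|_\mathbb{H}\, \|v\|_\mathbb{H}$. Part~(i) reduces to bounding the HS norms of the integrands and summands defining the four operators via the triangle inequality (equivalently, Minkowski's integral inequality) together with the $L^1_\mu$ hypotheses \eqref{eq:l1_conditions_th_1}--\eqref{eq:l1_conditions_th_4}. Part~(ii) follows the standard template used in \cite{RBD10}: write each difference $\widehat{\mathcal{C}}_{00} - \mathcal{C}_{00}$ and $\widehat{\mathcal{T}}_\mathbb{H} - \mathcal{T}_\mathbb{H}$ as an average of i.i.d., mean-zero, HS-norm-bounded random operators, and apply a Pinelis--Sakhanenko Hilbert-space Hoeffding inequality.

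For (i), the derivative reproducing property identifies $\|D^\alpha \phi(x)\|_\mathbb{H}^2$ with a mixed partial derivative of $k$ evaluated on the diagonal, of total order $2|\alpha|$. In particular $\|\phi(x)\otimes \phi(x)\|_{HS} = k(x,x)$, so the $L^1_\mu$ condition \eqref{eq:l1_conditions_th_4} gives $\|\mathcal{C}_{00}\|_{HS} \leq \int k(x,x)\, \mathrm{d}\mu(x) < \infty$. For the general $\mathcal{T}_\mathbb{H}$ from \eqref{eq:def_rkhs_op}, expanding $\mathrm{d}\phi(x)$ and applying the triangle inequality bounds the HS norm of the integrand term by term by precisely the integrands of \eqref{eq:l1_conditions_th_1}--\eqref{eq:l1_conditions_th_3}; the symmetric version \eqref{eq:def_rkhs_sym} is handled analogously with $\|\mathrm{d}\phi_l(x)\|_\mathbb{H}^2$ in place of a second-derivative term. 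The empirical estimates are finite averages of rank-one operators with the same pointwise bounds, and are therefore trivially Hilbert--Schmidt.

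For (ii), set $Z_m = \phi(x_m)\otimes \phi(x_m) - \mathcal{C}_{00}$. Under $\sup_x k(x,x) \leq \kappa_0$ these are i.i.d., mean zero, and satisfy $\|Z_m\|_{HS} \leq 2\kappa_0$. Pinelis--Sakhanenko then yields, with probability at least $1-\delta$,
\begin{equation*}
\bigl\|\mathcal{C}_{00} - \widehat{\mathcal{C}}_{00}\bigr\|_{HS} = \left\|\tfrac{1}{M}\sum_{m=1}^M Z_m\right\|_{HS} \leq \frac{2\kappa_0 \sqrt{2}}{\sqrt{M}} \log^{1/2}\frac{2}{\delta}.
\end{equation*}
Replacing $Z_m$ by $\phi(x_m)\otimes \mathrm{d}\phi(x_m) - \mathcal{T}_\mathbb{H}$ in the general case, or by the analogous symmetric summand, and repeating the argument with a uniform bound $\kappa_1$ on the HS norm of each new summand, gives the second inequality.

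The main technical obstacle is verifying the uniform HS bound $\|\phi(x)\otimes \mathrm{d}\phi(x)\|_{HS} \leq \kappa_1 < \infty$. This requires decomposing $\mathrm{d}\phi$ into its second-order, first-order, and zeroth-order contributions and combining the global boundedness of the coefficients of $\mathcal{T}$ (including $\nabla F$ and the partial derivatives of $a_{ij}$ entering $\nabla \cdot (e^{-F} a_{:,i})$) with the kernel-derivative hypothesis $\sup_x D^\alpha k(x,x) < \infty$ for $|\alpha| \leq 4$. The dichotomy between $|\alpha|\leq 4$ and $|\alpha|\leq 2$ arises precisely because the symmetric RKHS operator only sees first derivatives of $\phi$, so the relevant HS norms are controlled by derivatives of $k$ of total order at most $2$. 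Once this uniform bound is in hand, the remainder of (ii) reduces to a textbook application of Hilbert-space Hoeffding in the manner of \cite[Theorem~7]{RBD10}.
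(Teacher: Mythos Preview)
Your proposal is correct and follows essentially the same route as the paper's proof: part~(i) via the rank-one identity $\|u\otimes v\|_{HS}=\|u\|_\mathbb{H}\|v\|_\mathbb{H}$ together with the integrability conditions \eqref{eq:l1_conditions_th_1}--\eqref{eq:l1_conditions_th_4} (the empirical estimates being finite rank), and part~(ii) by writing the differences as averages of i.i.d.\ mean-zero HS-valued random variables, bounding $\|\phi(x)\otimes D^\alpha\phi(x)\|_{HS}=k(x,x)^{1/2}D^{2\alpha}k(x,x)^{1/2}$ uniformly via the kernel-derivative and coefficient-boundedness hypotheses, and invoking the Hilbert-space Hoeffding bound from \cite{RBD10}. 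Your explanation of why $|\alpha|\le 4$ versus $|\alpha|\le 2$ is needed is also exactly the point.
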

\begin{proof}
(i) The empirical estimates are all finite rank and therefore Hilbert-Schmidt. For $\mathcal{C}_{00}$ and $\mathcal{T}_\mathbb{H}$, this follows from the integrability conditions and the first part of the proof of Lemma \ref{lem:RKHS Galerkin}, see Appendix \ref{sec:Proofs}.
\newline
(ii) For $\mathcal{C}_{00}$, the bound was already proven in \cite{RBD10} with $\kappa_0 = \sup_{x\in \mathbb{X}} k(x, x)^2$. We can employ the same strategy to obtain the bound for $\mathcal{T}_\mathbb{H}$. Consider the operator $\hat{\mathcal{T}}^m_\mathbb{H} = \phi(x_m) \otimes \mathrm{d}\phi(x_m) - \mathcal{T}_\mathbb{H}$, which satisfies $\mathbb{E}^\mu[\hat{\mathcal{T}}^m_\mathbb{H}] = 0$. By global boundedness of the coefficients of $\mathcal{T}$ and by
\begin{align*}
\|\phi(x) \otimes D^\alpha \phi(x)\|_{HS} &= \|\phi(x)\|_\mathbb{H} \|D^\alpha \phi(x)\|_{\mathbb{H}} \\
&= \innerprod{k(x, \cdot)}{k(x, \cdot)}_\mathbb{H}^{1/2} \innerprod{D^\alpha k(x, \cdot)}{D^\alpha k(x, \cdot)}_\mathbb{H}^{1/2} \\
&= k(x, x)^{1/2} D^{2\alpha} k(x, x)^{1/2},
\end{align*}
we can find a $\kappa_1$ such that $\| \phi(x) \otimes \mathrm{d}\phi(x) \|_{HS} \leq \kappa_1$ for all $x\in \mathbb{X}$. We then have $\| \hat{\mathcal{T}}^m_\mathbb{H} \|_{HS} \leq 2\kappa_1$, and the result follows from the concentration bound \cite[Equation 3]{RBD10}.
\end{proof}

Finally, we show that solutions of \eqref{eq:weak_ev_rkhs} are also eigenvalues of the full operator $\mathcal{T}$ if the RKHS is dense in $\mathcal{D}_\mathcal{Q}$:
\begin{proposition}
Let $\mathbb{H}$ be dense in $\mathcal{D}_\mathcal{Q}$ with respect to the norm in $L^2_\mu$. If $\widetilde{\psi}_\ell \in \mathbb{H}$ is an eigenfunction of \eqref{eq:weak_ev_rkhs}, it is also an eigenfunction of $\mathcal{T}$ with the same eigenvalue.
\end{proposition}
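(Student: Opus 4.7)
The plan is to pass from the weak RKHS eigenvalue equation \eqref{eq:weak_ev_rkhs} to the $L^2_\mu$ weak equation \eqref{eq:weak_ev_generator} in two steps: first identify the two form pairs on the common subspace $\mathbb{H}$ using Lemma~\ref{lem:RKHS Galerkin}, and then extend the class of admissible test functions from $\mathbb{H}$ to all of $\mathcal{D}_\mathcal{Q}$ via the density hypothesis. Taking any $g \in \mathbb{H}$, Lemma~\ref{lem:RKHS Galerkin} yields
$$\mathcal{Q}(\widetilde{\psi}_\ell, g) = \mathcal{Q}_\mathbb{H}(\widetilde{\psi}_\ell, g) = \widetilde{\lambda}_\ell \, \mathcal{S}_\mathbb{H}(\widetilde{\psi}_\ell, g) = \widetilde{\lambda}_\ell \, \mathcal{S}(\widetilde{\psi}_\ell, g),$$
so \eqref{eq:weak_ev_generator} already holds whenever the test function is drawn from $\mathbb{H}$. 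In particular, the hypothesis $\mathbb{H} \subset \mathcal{D}(\mathcal{T})$ built into Lemma~\ref{lem:RKHS Galerkin} guarantees $\mathcal{T}\widetilde{\psi}_\ell \in L^2_\mu$, so both sides can be rewritten as $L^2_\mu$ inner products against $g$, namely $\innerprod{\mathcal{T}\widetilde{\psi}_\ell}{g}_\mu$ and $\widetilde{\lambda}_\ell \innerprod{\widetilde{\psi}_\ell}{g}_\mu$.

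For the second step, fix an arbitrary $g \in \mathcal{D}_\mathcal{Q}$ and use the density hypothesis to pick a sequence $g_n \in \mathbb{H}$ with $g_n \to g$ in $L^2_\mu$. By the Cauchy--Schwarz inequality, $\innerprod{\mathcal{T}\widetilde{\psi}_\ell}{g_n}_\mu \to \innerprod{\mathcal{T}\widetilde{\psi}_\ell}{g}_\mu$ and $\innerprod{\widetilde{\psi}_\ell}{g_n}_\mu \to \innerprod{\widetilde{\psi}_\ell}{g}_\mu$, so passing to the limit in the identity of step one gives $\mathcal{Q}(\widetilde{\psi}_\ell, g) = \widetilde{\lambda}_\ell \, \mathcal{S}(\widetilde{\psi}_\ell, g)$ for every $g \in \mathcal{D}_\mathcal{Q}$, which is precisely \eqref{eq:weak_ev_generator}. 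If $\mathcal{D}_\mathcal{Q}$ is itself $L^2_\mu$-dense---as is standard for form domains of elliptic operators---one in fact obtains the strong identity $\mathcal{T}\widetilde{\psi}_\ell = \widetilde{\lambda}_\ell \widetilde{\psi}_\ell$ in $L^2_\mu$.

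The main obstacle, and the only place the hypotheses really need to be engaged, is that density in the $L^2_\mu$-norm is considerably weaker than density in the natural form-domain norm of $\mathcal{Q}$. In general, $\mathcal{Q}(\widetilde{\psi}_\ell, \cdot)$ involves derivatives of the test function and is not continuous with respect to $L^2_\mu$. What rescues the argument is that $\widetilde{\psi}_\ell$ itself lies in $\mathcal{D}(\mathcal{T}) \supset \mathbb{H}$, so the form pair collapses to an $L^2_\mu$-pairing with the \emph{fixed} function $\mathcal{T}\widetilde{\psi}_\ell$; continuity in $g$ under the weaker $L^2_\mu$-topology then holds automatically, and the hypothesized weak density is enough to close the argument.
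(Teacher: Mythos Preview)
Your proof is correct and follows essentially the same approach as the paper: use Lemma~\ref{lem:RKHS Galerkin} to translate the RKHS weak eigenvalue equation into the $L^2_\mu$ pairing $\innerprod{\mathcal{T}\widetilde{\psi}_\ell}{g}_\mu = \widetilde{\lambda}_\ell \innerprod{\widetilde{\psi}_\ell}{g}_\mu$ for $g \in \mathbb{H}$, and then extend to $g \in \mathcal{D}_\mathcal{Q}$ by density. You are in fact more explicit than the paper about \emph{why} $L^2_\mu$-density (rather than form-norm density) suffices here---namely, because $\widetilde{\psi}_\ell \in \mathcal{D}(\mathcal{T})$ collapses $\mathcal{Q}(\widetilde{\psi}_\ell,\cdot)$ to an $L^2_\mu$-continuous functional---and about the further step from the weak identity on $\mathcal{D}_\mathcal{Q}$ to the strong equation; the paper leaves both of these implicit.
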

\begin{proof}
Let $\widetilde{\psi}_\ell$ solve the variational problem \eqref{eq:weak_ev_rkhs}. The definition of the operators $\mathcal{C}_{00},\, \mathcal{T}_\mathbb{H}$ implies that for all $\phi \in \mathbb{H}$:
\begin{align*}
\innerprod{\mathcal{T} \widetilde{\psi}_\ell}{\phi}_\mu &= \innerprod{\mathcal{T}_\mathbb{H} \widetilde{\psi}_\ell}{\phi}_{\mathbb{H}} = \widetilde{\lambda}_\ell \innerprod{\mathcal{C}_{00} \widetilde{\psi}_\ell}{\phi}_{\mathbb{H}} = \widetilde{\lambda}_\ell \innerprod{\widetilde{\psi}_\ell}{\phi}_\mu.
\end{align*}
By density of the RKHS, this also holds for all $\phi \in \mathcal{D}_\mathcal{Q}$, and consequently, $\widetilde{\psi}_\ell$ is an eigenfunction of $\mathcal{T}$.
\end{proof}

Note that even if the RKHS is dense, there might be additional eigenfunctions which are not contained in $\mathbb{H}$ and which will not appear as solutions of \eqref{eq:weak_ev_rkhs}.

\section{Applications}
\label{sec:Applications}

The methods described above have important applications in molecular dynamics and quantum physics, which we will show in an exemplary way, but can in principle be applied to data generated by arbitrary dynamical systems and also other differential operators. The code and select examples are available at \url{https://github.com/sklus/d3s/}. Note that this is just a proof-of-concept implementation and that the methods could be sped up significantly by vectorizing and parallelizing the code and by tailoring the implementation to specific kernels.

\subsection{Molecular dynamics}

Eigenvalues and eigenfunctions of transfer operators associated with molecular dynamics problems are often used to understand protein folding or binding/unbinding processes and their implied time scales. Conformations correspond to metastable sets and transitions between different conformations to crossing energy barriers. The slowest dynamical processes are encoded in eigenfunctions whose eigenvalues are close to zero. Large-scale molecular dynamics examples, analyzed using kernel EDMD, can also be found in \cite{KBSS18}. In this paper, we want to focus more on new applications.

\begin{example}

\begin{figure}
    \centering
    \begin{minipage}{0.32\textwidth}
        \centering
        \subfiguretitle{(a)}
        \includegraphics[width=\textwidth]{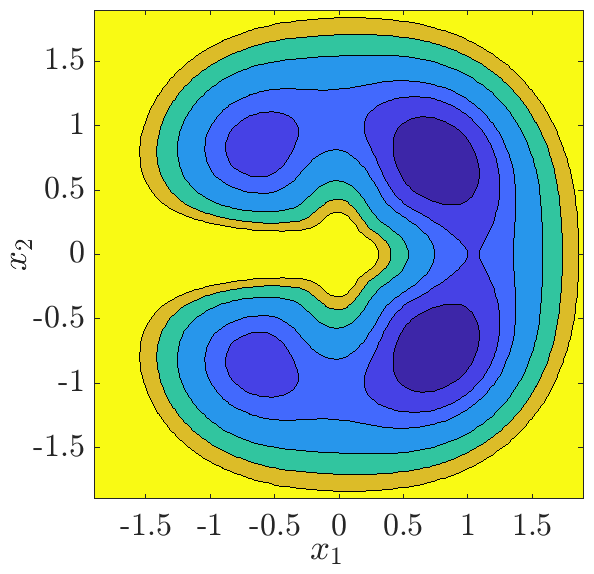}
    \end{minipage}
    \begin{minipage}{0.32\textwidth}
        \centering
        \subfiguretitle{(b)}
        \includegraphics[width=\textwidth]{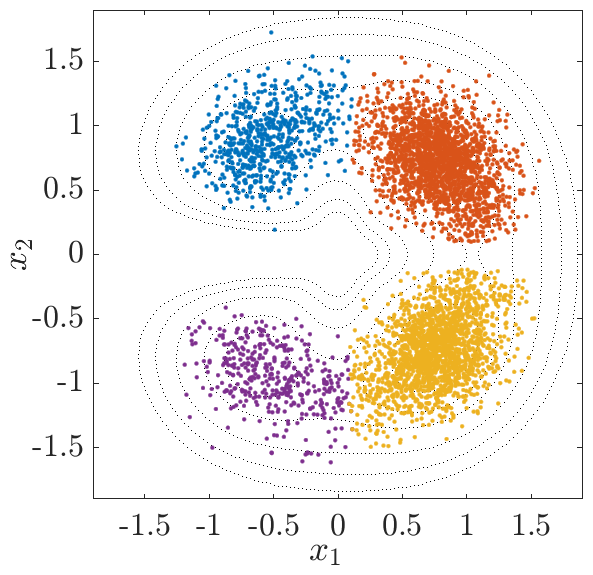}
    \end{minipage}
    \begin{minipage}{0.315\textwidth}
        \centering
        \subfiguretitle{(c)}
        \includegraphics[width=\textwidth]{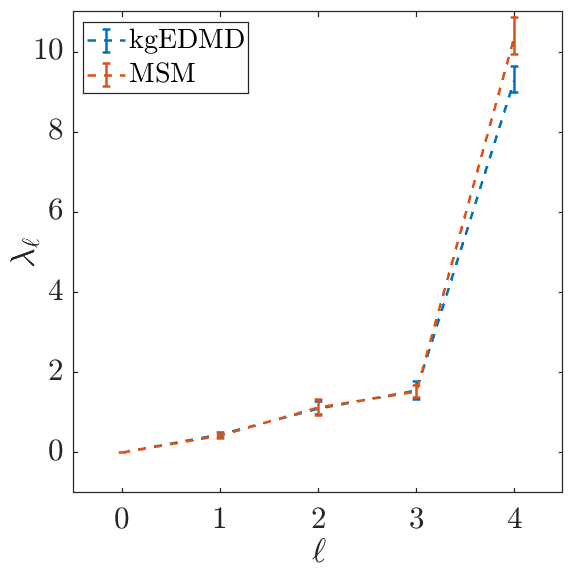}
    \end{minipage}
    \caption{(a) Quadruple-well potential. The color blue corresponds to small values and yellow to large values. (b) Clustering into four metastable sets based on SEBA. (c) Eigenvalues computed using kernel gEDMD and a  Markov state model. The bars indicate the estimated standard deviation.}
    \label{fig:LemonSlice}
\end{figure}

Let us consider the simple quadruple-well problem whose potential $ V $ is visualized in Figure~\ref{fig:LemonSlice}(a), see also \cite{KNPNCS20}. We first generate an equilibrated trajectory so that the training data set of size $ M = 5000 $ is sampled from the invariant distribution and then apply kernel gEDMD for reversible processes, choosing a Gaussian kernel with bandwidth $ \varsigma = 0.5 $. The operator $ -\mathcal{L} $ has four dominant eigenvalues $ \lambda_0 = 0.009 $, $ \lambda_1 = 0.400 $, $ \lambda_2 = 1.011 $, and $ \lambda_3 = 1.55 $, followed by a spectral gap. We then apply SEBA (\emph{sparse eigenbasis approximation}, see~\cite{FROYLAND19}) to cluster the dominant eigenfunctions into four metastable sets. The results are shown in Figure~\ref{fig:LemonSlice}(b). As expected, the sets correspond to the wells of the potential. The computation and clustering of the eigenfunctions takes approximately 4 minutes on a standard laptop (8 cores, 1.80 GHz, 16 GB of RAM). For comparison, we estimate the generator eigenvalues using a Markov state model. Applying both methods to 20 different trajectories, we compute the average of the eigenvalues and the standard deviation, see Figure~\ref{fig:LemonSlice}(c). The results are in excellent agreement. Clearly, the standard deviation increases for higher eigenvalues. \exampleSymbol

\end{example}

\subsection{Quantum mechanics}

The goal now is to apply data-driven methods to simple quantum mechanics problems of the form \eqref{eq:TISE} with $ \mathcal{H} = - \frac{\hbar^2}{2 \mathbf{m}} \Delta + W$.

\subsubsection{Generator EDMD for the Schrödinger equation}

Let us consider two systems for which the eigenfunctions are well-known.

\begin{example}
For the quantum harmonic oscillator with angular frequency $ \omega $, the potential can be written as $ W(x) = \frac{1}{2} \textbf{m} \ts \omega^2 x^2 $. The eigenfunctions $ \psi_\ell $ and corresponding energy levels $ E_\ell $ of this system can be computed analytically, we obtain
\begin{equation*}
    \psi_\ell(x) = \frac{1}{\sqrt{2^\ell\,\ell!}} \left(\frac{\mathbf{m} \omega}{\pi \hbar}\right)^{1/4} \exp\left(-\frac{\mathbf{m} \omega}{2 \hbar} x^2\right) H_\ell \left(\sqrt{\frac{\mathbf{m} \omega}{\hbar}} x \right)
\end{equation*}
and $ E_\ell = \hbar \ts \omega \left(\ell + \frac{1}{2}\right) $, for $ \ell = 0, 1, 2, \dots $. Here, $ H_\ell $ denotes the $\ell$th physicists' Hermite polynomial. For the numerical experiments, we set $ \hbar = \mathbf{m} = \omega = 1 $. Also the bandwidth of the kernel is set to $ \varsigma = 1 $. Computing the Gram matrices $ G_{2} $ and $ G_{0} $ for $ 100 $ uniformly distributed points in $ [-5, 5] $ and solving the corresponding eigenvalue problem, this results in the eigenfunctions shown in Figure~\ref{fig:QHO}. The probability densities $ p_\ell $ are defined by $ p_\ell(x) = \abs{\psi_\ell(x)}^2 $. \exampleSymbol

\begin{figure}
    \centering
    \begin{minipage}{0.49\textwidth}
        \centering
        \subfiguretitle{(a)}
        \includegraphics[width=0.9\textwidth]{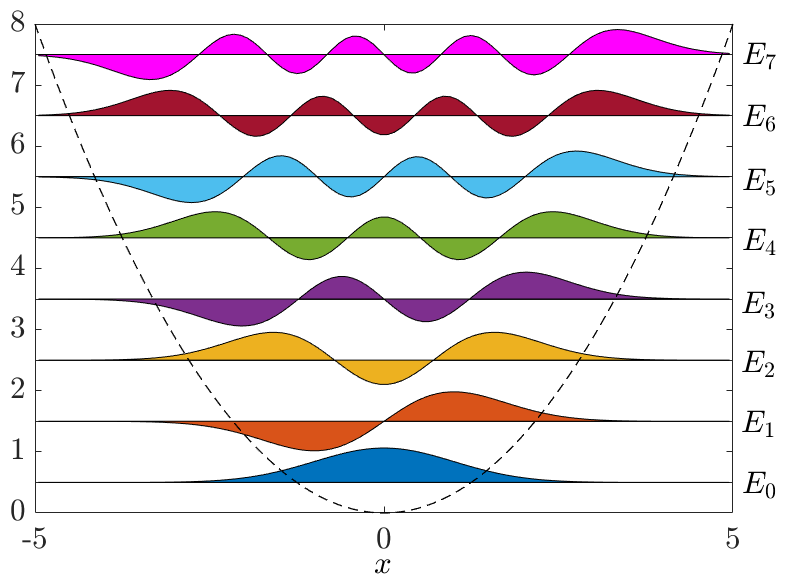}
    \end{minipage}
    \begin{minipage}{0.49\textwidth}
        \centering
        \subfiguretitle{(b)}
        \includegraphics[width=0.9\textwidth]{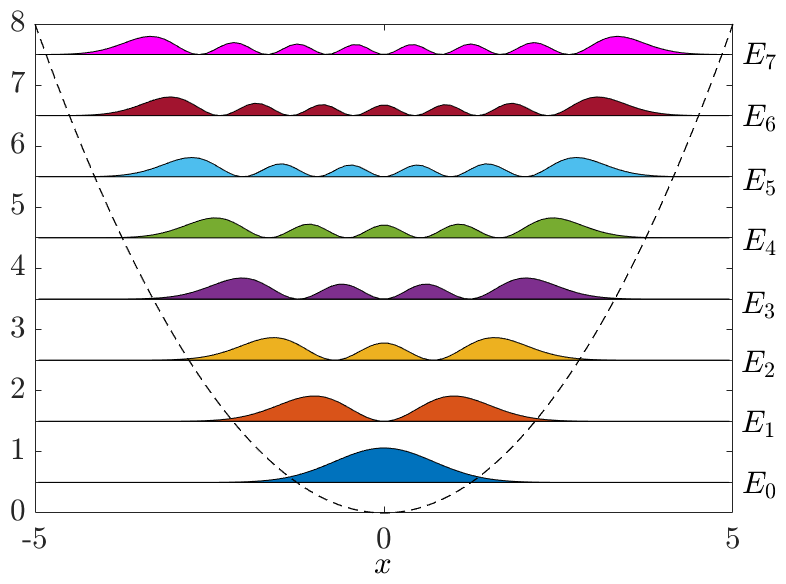}
    \end{minipage}
    \caption{(a) Numerically computed eigenfunctions $ \psi_\ell $ and associated energy levels $ E_\ell $ of the quantum harmonic oscillator. The results are virtually indistinguishable from the analytical results. (b) Corresponding probability densities $ p_\ell $.}
    \label{fig:QHO}
\end{figure}

\end{example}

\begin{example}
As a second example, let us analyze the Schrödinger equation for the hydrogen atom, where $ W(x) = -\frac{e^2}{4 \pi \varepsilon_0 \norm{x}} $, with $ x \in \R^3 $. Here, $ e $ is the electron charge and $ \varepsilon_0 $ the vacuum permittivity. Note that the parameter $ \mathbf{m} $ in front of the Laplacian is the reduced mass of the system. As before, we define the physical constants to be one and use the Gaussian kernel, now with bandwidth $ \varsigma = 2 $. We then generate 5000 uniformly distributed test points in the ball with radius $ 20 $ and compute the Gram matrices $ G_{2} $ and $ G_{0} $. Solving the resulting eigenvalue problem, we obtain the eigenfunctions shown in Figure~\ref{fig:Hydrogen}. As expected, there are several repeated eigenvalues (up to small perturbations due to the randomly sampled test points and numerical errors) for the higher energy states. \exampleSymbol

\begin{figure}
    \centering
    \begin{minipage}{0.1\textwidth}
        \includegraphics[width=0.75\textwidth]{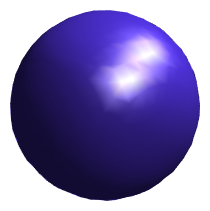}
    \end{minipage}
    \begin{minipage}{0.38\textwidth}
        \centering
        \subfiguretitle{(a)}
        \includegraphics[width=\textwidth]{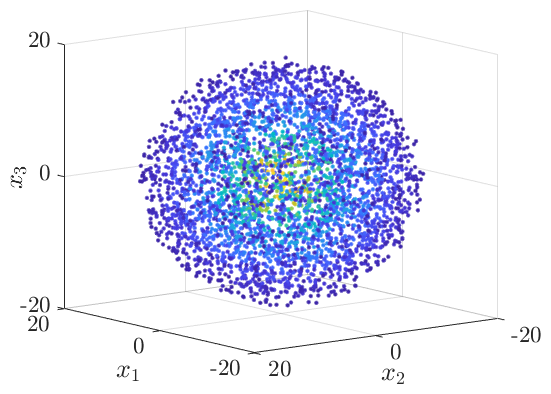}
    \end{minipage}
    \begin{minipage}{0.38\textwidth}
        \centering
        \subfiguretitle{(b)}
        \includegraphics[width=\textwidth]{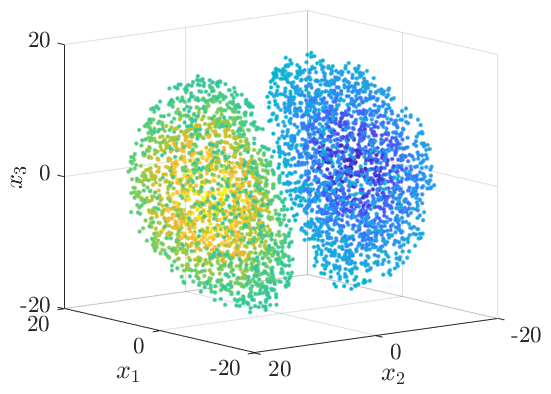}
    \end{minipage}
    \begin{minipage}{0.1\textwidth}
        \includegraphics[width=0.9\textwidth]{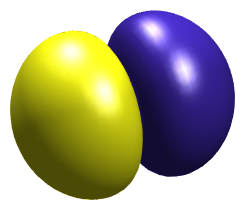}
    \end{minipage} \\[1ex]
    \begin{minipage}{0.1\textwidth}
        \includegraphics[width=0.9\textwidth]{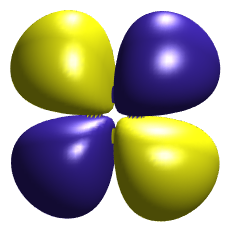}
    \end{minipage}
    \begin{minipage}{0.38\textwidth}
        \centering
        \subfiguretitle{(c)}
        \includegraphics[width=\textwidth]{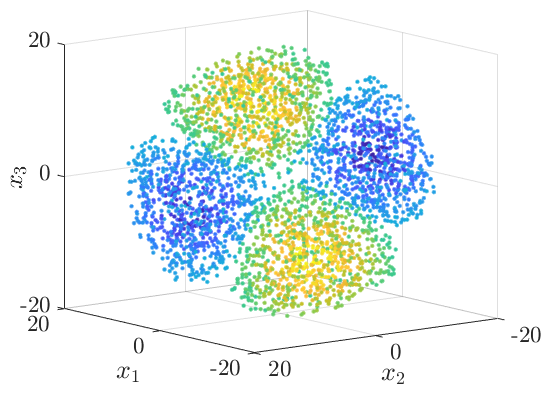}
    \end{minipage}
    \begin{minipage}{0.38\textwidth}
        \centering
        \subfiguretitle{(d)}
        \includegraphics[width=\textwidth]{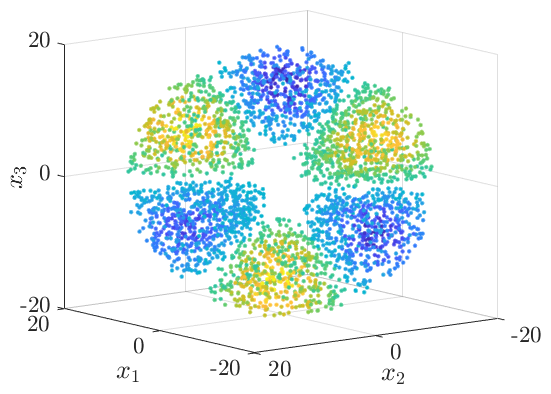}
    \end{minipage}
    \begin{minipage}{0.1\textwidth}
        \includegraphics[width=0.9\textwidth]{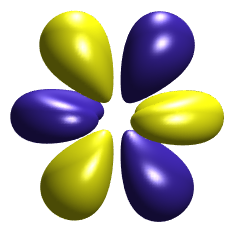}
    \end{minipage}
    \caption{Numerically computed eigenfunctions of the Schrödinger equation associated with the hydrogen atom. Only points where the absolute value of the eigenfunction is larger than a given threshold are plotted. The shapes clearly resemble the well-known hydrogen atom orbitals shown next to the scatter plots. The eigenfunctions (or rotations thereof) correspond to the following quantum numbers $ (\underline{n}, \underline{\ell}, \underline{m}) $: (a) $(1, 0, 0)$, (b) $ (2, 1, 1) $, (c) $ (3, 2, 1) $, (d) $ (4, 3, 1) $.}
    \label{fig:Hydrogen}
\end{figure}

\end{example}

\subsubsection{SDE formulation of the Schrödinger equation}

In order to derive gEDMD, we went from the stochastic differential equation to the Kolmogorov backward equation, which is the generator of the Koopman operator, or the adjoint Fokker--Planck equation, which is the generator of the Perron--Frobenius operator. Exploiting the resemblance between these two equations and the Schrödinger equation, we illustrated how data-driven methods can, in the same way, be used to compute wavefunctions. We now want to go in the opposite direction and find a stochastic differential equation whose eigenfunctions correspond to the wavefunctions. Formal similarities between quantum mechanics and the theory of stochastic processes have been investigated since the beginning of quantum mechanics by Schrödinger and others (see, for example, \cite{Okamoto_1990} and references therein). The necessary transformations were already introduced in Section~\ref{sec:Dynamical operators}, we now want to exploit these relationships. Let us consider the two aforementioned examples again.

\begin{example}
Using Corollary~\ref{corr:KBE_SE}, the quantum harmonic oscillator can be transformed into an Ornstein--Uhlenbeck process
\begin{equation*}
    \mathrm{d}X_t = -\alpha \ts X_t \ts \mathrm{d}t + \sqrt{2 \beta^{-1}} \ts \mathrm{d}B_t,
\end{equation*}
with friction coefficient $ \alpha = \hbar \omega $ and temperature $ \beta^{-1} = \frac{\hbar^2}{2 \mathbf{m}} $. Since the eigenvalues of the Ornstein--Uhlenbeck process are $ \lambda_\ell = -\alpha \ell = -\hbar \omega \ell $, the resulting eigenvalues of the quantum harmonic oscillator are $ E_\ell = E_0 - \lambda_\ell = \hbar \ts \omega \left(\ell + \frac{1}{2}\right) $. Correspondingly, the (unnormalized) eigenfunctions of the Ornstein--Uhlenbeck process are $ \varphi_\ell(x) = \widetilde{H}_\ell(\sqrt{\alpha \beta} \ts x) $, where $ \widetilde{H}_\ell $ is the $\ell$th probabilists’ Hermite polynomial. Thus,
\begin{equation*}
    \psi_\ell(x)
        = \psi_0(x) \widetilde{H}_\ell \left(\sqrt{\frac{2 \mathbf{m} \omega}{\hbar}} \ts x\right)
        = \exp\left(-\frac{\mathbf{m} \omega}{2 \hbar} x^2\right) H_\ell\left(\sqrt{\frac{\mathbf{m} \omega}{\hbar}} \ts x\right),
\end{equation*}
which is consistent with the results obtained above. In the last step, we transformed the probabilists' Hermite polynomials into physicists' Hermite polynomials. \exampleSymbol
\end{example}

\begin{example}
Similarly, for the hydrogen atom, whose ground state is given by
\begin{equation*}
    \psi_0(x) = \frac{1}{\sqrt{\pi a_0^3}} \exp\left(-\frac{1}{a_0}\norm{x}\right),
\end{equation*}
where $ a_0 = \frac{4 \pi \varepsilon_0 \hbar^2}{\mathbf{m} e^2} $, we obtain $ V(x) = \frac{\hbar^2}{\mathbf{m} \ts a_0} \norm{x} $ and thus
\begin{equation*}
    \nabla V(x) = \frac{\hbar^2}{\mathbf{m} \ts a_0} \frac{x}{\norm{x}}.
\end{equation*}
There are now two options to compute the eigenfunctions numerically: We can either directly apply kernel gEDMD to the Koopman generator or generate time-series data by integrating the stochastic differential equation and then apply kernel EDMD or simply Ulam's method. We proceed with the former, but the latter leads to comparable results (although typically more data points are required to achieve the same accuracy due to the stochasticity). We again generate uniformly distributed test points $ x_m $ in the ball with radius $ 20 $, this time $m = 10\ts000$, and use the Gaussian kernel with bandwidth $ \varsigma = 2 $. This results in the same eigenfunctions as the ones shown in Figure~\ref{fig:Hydrogen}. Due to the larger number of test points, even higher energy states can be well-approximated. Two additional eigenfunctions are shown in Figure~\ref{fig:Hydrogen SDE}. \exampleSymbol

\begin{figure}
    \centering
    \begin{minipage}{0.1\textwidth}
        \includegraphics[width=0.8\textwidth]{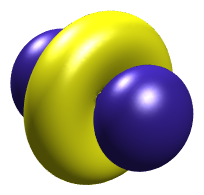}
    \end{minipage}
    \begin{minipage}{0.38\textwidth}
        \centering
        \subfiguretitle{(a)}
        \includegraphics[width=\textwidth]{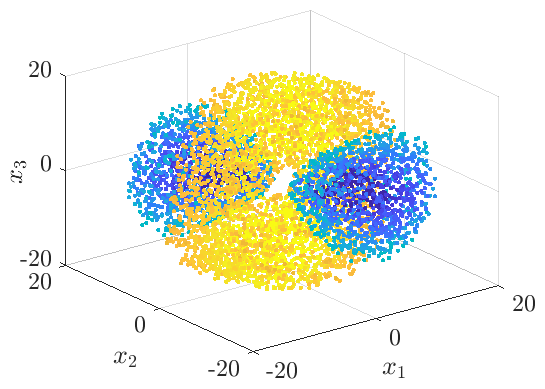}
    \end{minipage}
    \begin{minipage}{0.38\textwidth}
        \centering
        \subfiguretitle{(b)}
        \includegraphics[width=\textwidth]{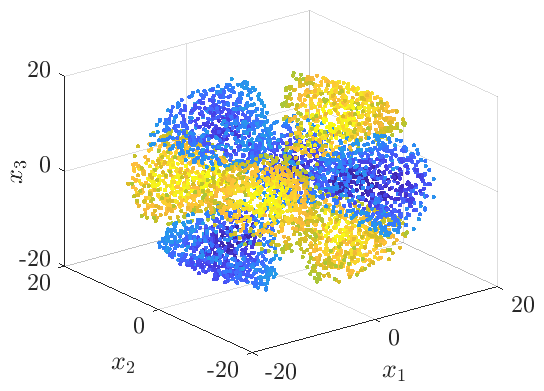}
    \end{minipage}
    \begin{minipage}{0.1\textwidth}
        \includegraphics[width=0.8\textwidth]{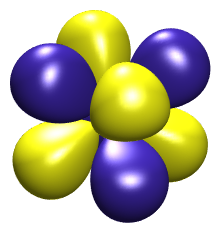}
    \end{minipage}
    \caption{Eigenfunctions of the Schrödinger equation associated with the hydrogen atom computed by applying kernel gEDMD to the corresponding Koopman generator. The quantum numbers $ (\underline{n}, \underline{\ell}, \underline{m}) $ are: (a) $(3, 2, 0)$, (b) $(4, 3, 2)$.}
    \label{fig:Hydrogen SDE}
\end{figure}

\end{example}

The examples illustrate that instead of solving partial differential equations, we can also compute eigenfunctions by approximating the Koopman operator from time-series data. The question under which conditions a non-degenerate strictly positive ground state exists needs to be addressed separately. One important theorem can be found in~\cite{ReedSimonIV}:

\begin{theorem}
Let $ L^2_{\text{loc}}(\mathbb{X}) $ be the space of locally square-integrable functions and $ W \in L^2_{\text{loc}}(\mathbb{X}) $ positive. Suppose $ \lim_{\abs{x} \to \infty} W(X) = \infty $, then $ -\Delta + W $ has a non-degenerate strictly positive ground state.
\end{theorem}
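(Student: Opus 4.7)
The plan is to prove the theorem in four stages: (i) define $\mathcal{H} = -\Delta + W$ rigorously via a closed non-negative quadratic form, which automatically provides self-adjointness; (ii) exploit the confining condition $W(x) \to \infty$ to deduce that the resolvent is compact and hence that a ground-state eigenfunction $\psi_0$ exists; (iii) use Kato's inequality to show that $\psi_0$ may be chosen a.e.\ non-negative; and (iv) upgrade non-negativity to strict positivity, and derive non-degeneracy, via a positivity-improving semigroup argument.

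First, I would associate $\mathcal{H}$ with the non-negative symmetric quadratic form
\[
q(f, f) = \int_{\mathbb{X}} \bigl( |\nabla f(x)|^2 + W(x) |f(x)|^2 \bigr) \, dx
\]
on the form domain $\mathcal{D}(q) = \{ f \in H^1(\mathbb{X}) : \int W |f|^2 < \infty \}$. Since $W \ge 0$ and $W \in L^2_{\text{loc}}$, $C_c^\infty(\mathbb{X})$ is a form core, so $q$ is densely defined and closed, and the representation theorem yields a unique self-adjoint operator $\mathcal{H} \ge 0$. Equipping $\mathcal{D}(q)$ with the graph norm $\|f\|_q^2 = q(f,f) + \|f\|_{L^2}^2$, the embedding $\mathcal{D}(q) \hookrightarrow L^2(\mathbb{X})$ is compact: restricted to any ball $B_R$, a form-bounded sequence is bounded in $H^1(B_R)$ and hence precompact in $L^2(B_R)$ by Rellich--Kondrachov, while the confining condition together with Chebyshev's inequality $\int_{|x| \ge R} |f|^2 \le (\inf_{|x| \ge R} W)^{-1} q(f, f)$ makes the tails go to zero uniformly. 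A diagonal extraction then produces an $L^2$-convergent subsequence. Consequently $\sigma(\mathcal{H})$ is purely discrete and accumulates only at $+\infty$, and the infimum $E_0 = \inf \sigma(\mathcal{H})$ is attained by some normalized eigenfunction $\psi_0$.

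For step three, Kato's distributional inequality $\bigl| \nabla |f| \bigr| \le |\nabla f|$ a.e.\ gives $q(|\psi_0|, |\psi_0|) \le q(\psi_0, \psi_0) = E_0$, so $|\psi_0|$ is itself a ground state and we may assume $\psi_0 \ge 0$ a.e. The main obstacle is the final step, which hinges on showing that the semigroup $e^{-t\mathcal{H}}$ is \emph{positivity-improving}, i.e.\ that it sends any non-zero non-negative function to an a.e.\ strictly positive one. I would establish this via the Trotter product formula
\[
e^{-t(-\Delta + W)} = \lim_{n \to \infty} \bigl( e^{(t/n) \Delta} \, e^{-(t/n) W} \bigr)^n
\]
in the strong operator topology, using that each free heat propagator $e^{(t/n) \Delta}$ has a strictly positive integral kernel, that multiplication by the a.e.\ positive factor $e^{-(t/n) W}$ preserves this property, and that both features persist under the limit. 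Since $\psi_0 = e^{t E_0} e^{-t \mathcal{H}} \psi_0$, it follows that $\psi_0 > 0$ a.e. Non-degeneracy is then deduced as follows: any real ground state $\phi$ decomposes as $\phi = \phi^+ - \phi^-$ with disjoint supports, so $q(\phi^+, \phi^+) + q(\phi^-, \phi^-) = q(\phi, \phi) = E_0 \|\phi\|^2$, whence each non-zero summand is itself a ground state and therefore a.e.\ strictly positive by the preceding argument; but $\phi^+$ vanishes where $\phi \le 0$, forcing $\phi^- = 0$ or $\phi^+ = 0$, so $\phi$ has constant sign. Two linearly independent real ground states would then produce, after Gram--Schmidt, a non-zero ground state of constant sign orthogonal to $\psi_0 > 0$, a contradiction. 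The technical subtleties that remain to be verified under the weak hypothesis $W \in L^2_{\text{loc}}$ with $W \ge 0$ are precisely the validity of Kato's inequality and of the Trotter formula in this regularity regime, which form the analytic core of the classical treatment in \cite{ReedSimonIV}.
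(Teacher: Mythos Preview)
The paper does not supply its own proof of this theorem; it is quoted from \cite{ReedSimonIV} purely as a sufficient condition guaranteeing a strictly positive ground state so that the Schr\"odinger-to-Kolmogorov transformation of Lemma~\ref{lem:KBE_SE} and Corollary~\ref{corr:KBE_SE} can be applied. Your four-stage outline---quadratic-form self-adjointness, compact resolvent from the confining potential via Rellich--Kondrachov plus the tail estimate, non-negativity of a minimizer via Kato's inequality, and strict positivity together with simplicity via the positivity-improving semigroup/Trotter argument---is exactly the classical route taken in that reference (cf.\ Theorems~XIII.47 and~XIII.44 there), so there is no discrepancy to report and the argument is correct as sketched.
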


There are other results concerning the existence of such states, see \cite{ReedSimonIV} for details. Also diffusion Monte Carlo methods, which simultaneously compute the ground state energy and wavefunction, rely on similar assumptions~\cite{KFS96}. However, in many cases of interest, the ground state of fermionic systems will have nodes so that these methods are not applicable~\cite{KFS96}. The work presented here aims mainly at linking different operators describing the evolution of dynamical systems, more detailed relationships---in particular with the aforementioned diffusion Monte Carlo methods---and practical implications will be studied in future work.

\subsection{Manifold learning}

So far, we assumed that the data is generated by a dynamical system. There is, however, a second scenario without any notion of time, where the Kolmogorov backward equation and Fokker--Planck equation are used for dimensionality reduction and manifold learning \cite{NLCK06}, see also \cite{Coifman06, CKLMN08, NLCK08} and references therein.

Let the data points $ \{ \ts x_m \ts \}_{m=1}^M $ be sampled from an arbitrary probability density $ \rho $, then we can define the associated potential by
\begin{equation*}
    U(x) = -\log \rho(x).
\end{equation*}
It was shown in \cite{NLCK06} that, depending on some normalization parameter $ \alpha $, anisotropic diffusion maps approximate operators of the form
\begin{equation*}
    \mathcal{L}_\alpha f = -2 (1 - \alpha) \nabla U \cdot \nabla f + \Delta f.
\end{equation*}
That is, for $ \alpha = \frac{1}{2} $, we obtain the standard Kolmogorov backward equation with $ \beta = 1 $. Thus, the algorithms described above could also potentially be used for manifold learning purposes. We will illustrate this with a simple example.

\begin{example}
We consider the well-known Swiss roll, see, for instance, \cite{NLCK08}. The goal is to parametrize the two-dimensional manifold. We use kernel density estimation, cf.~\cite{Parzen62}, and a Gaussian kernel with bandwidth $ \varsigma = 0.22 $ to learn $ U(x) $, i.e.,
\begin{equation*}
    U(x) = \frac{1}{M (\sqrt{2 \pi} \ts \varsigma)^d} \sum_{m=1}^M k(x, x_m)
\end{equation*}
and approximate the backward Kolmogorov operator by applying kernel gEDMD. Here, $ M = 2000 $ and $ d = 2 $. The results are shown in Figure~\ref{fig:SR}. The first eigenfunction parametrizes the angular direction, followed by higher-order modes, and only the sixth eigenfunction corresponds to the $ x_3 $ direction. Considering these eigenfunctions as new coordinates, we obtain an unfolding of the roll. Note that also diffusion maps do not yield perfect rectangles in the embedded space due to the non-uniform density of points on the manifold \cite{NLCK08}. \exampleSymbol

\begin{figure}
    \centering
    \begin{minipage}{0.36\textwidth}
        \centering
        \subfiguretitle{(a)}
        \includegraphics[width=\textwidth]{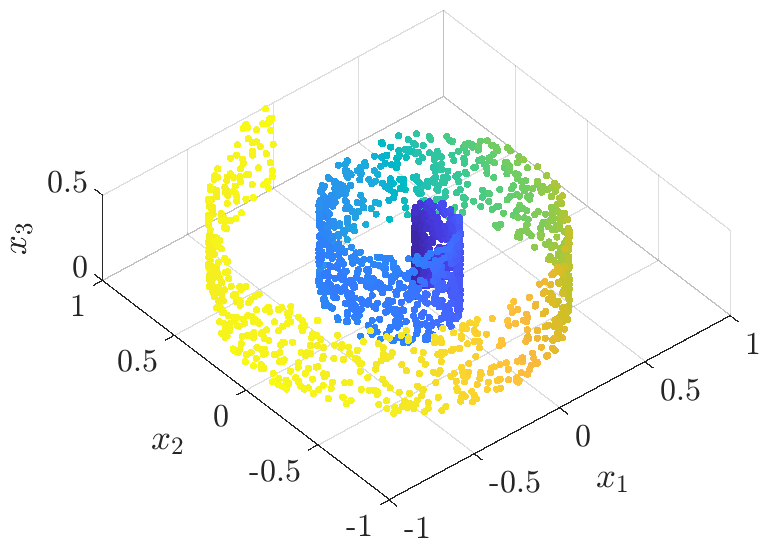}
    \end{minipage}
    \begin{minipage}{0.36\textwidth}
        \centering
        \subfiguretitle{(b)}
        \includegraphics[width=\textwidth]{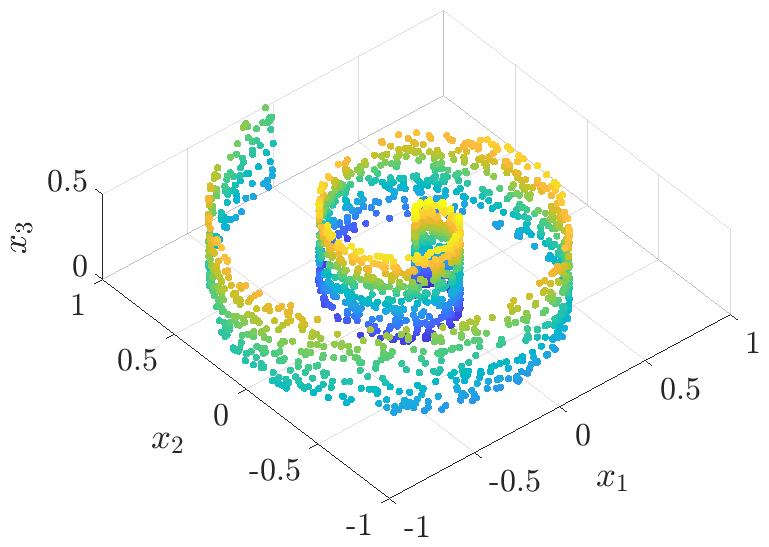}
    \end{minipage}
    \begin{minipage}{0.26\textwidth}
        \centering
        \subfiguretitle{(c)}
        \includegraphics[width=\textwidth]{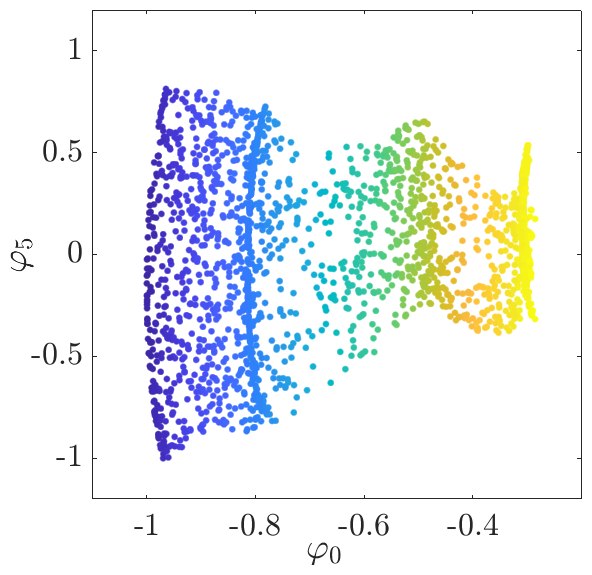}
    \end{minipage}
    \caption{Swiss roll colored with respect to the eigenfunctions (a) $ \varphi_0 $ and (b) $ \varphi_5 $, which parametrize the angular and vertical direction, respectively. (c)~Resulting two-dimensional embedding.}
    \label{fig:SR}
\end{figure}

\end{example}

These results demonstrate that the eigenfunctions of certain differential operators capture geometrical properties of the data. However, the assumption that a strictly positive density in the ambient space exists will in general not be satisfied if the data is supported only on a lower-dimensional manifold. This problem was circumvented by using kernel density estimation and a kernel with global support. Carrying over the definition of the differential operators involved and of their kernel-based analogues to the manifold case is beyond the scope of this paper and will be studied in future work. The same applies to the investigation of detailed relationships with diffusion maps or other manifold learning techniques. Concepts like \emph{neighborhood} and \emph{sparsity} will then need to be carried over to gEDMD to make this method amenable to large data sets. Furthermore, heuristics to find the optimal bandwidth $ \varsigma $ are required since the results often strongly depend on the kernel hyperparameters.

\section{Conclusion}
\label{sec:Conclusion}

Using the theory of derivative reproducing kernel Hilbert spaces, we derived a kernel-based formulation of gEDMD for approximating the Koopman generator, which allows for the computation of eigenfunctions of potentially high-dimensional stochastic dynamical systems. If the system is reversible, the generator can be approximated from equilibrated time-series data, without having to estimate the drift and diffusion terms at the training data points. Furthermore, we showed that data-driven methods developed for the analysis of stochastic dynamical systems (kernel EDMD) can be carried over to their generators (kernel gEDMD) and, in turn, to the Schrödinger operator. Conversely, under certain assumptions on the ground state, the Schrödinger equation can be turned into a Kolmogorov backward equation corresponding to a drift-diffusion process. These results are summarized in Figure~\ref{fig:Relationships}. Similar transformations also exist for the Fokker--Planck operator, see~\cite{Pav14}. All derived approaches were illustrated with numerical results ranging from molecular dynamics to quantum mechanics.

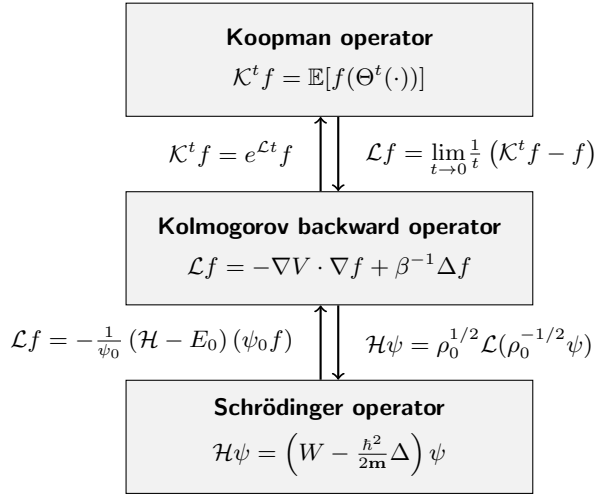
\begin{figure}
    \centering
    \begin{tikzpicture}
    [auto,font=\footnotesize,
     every node/.style={node distance=2.5cm},
     method/.style={rectangle, draw, fill=black!05, inner sep=5pt, text width=5cm, text badly centered, minimum height=1.5cm, font=\bfseries\footnotesize\sffamily}]
    
    \node [method] (KO) {Koopman operator \\[1ex] $ \mathcal{K}^t f = \mathbb{E}[f(\Theta^t(\cdot))] $ };
    
    \node [method, below of=KO] (KBO) { Kolmogorov backward operator \\[1ex] $ \mathcal{L} f =  -\nabla V \cdot \nabla f + \beta^{-1} \Delta f $ };
    
    \node [method, below of=KBO] (SO) { Schrödinger operator \\[1ex] $ \mathcal{H} \psi = \left(W -\frac{\hbar^2}{2 \mathbf{m}} \Delta\right) \psi $ };
    
    \path[->,thick]
    (KO) edge [right, transform canvas={xshift=0.7ex}] node {$~~\mathcal{L} f = \smash{\lim\limits_{t \rightarrow 0}} \frac{1}{t} \left(\mathcal{K}^t f - f \right)$} (KBO);
    
    \path[<-,thick]
    (KO) edge [left, transform canvas={xshift=-0.7ex}] node {$ \mathcal{K}^t f = e^{\mathcal{L}t} f~~$} (KBO);
    
    \path[->,thick]
    (KBO) edge [right, transform canvas={xshift=0.7ex}] node {$~~\mathcal{H}\psi = \rho_0^{1/2} \mathcal{L}(\rho_0^{-1/2} \psi) $} (SO);
    
    \path[<-,thick]
    (KBO) edge [left, transform canvas={xshift=-0.7ex}] node {$ \mathcal{L}f = -\frac{1}{\psi_0}\left(\mathcal{H} - E_0\right)(\psi_0 f)~~$} (SO);
    
    \end{tikzpicture}
    \caption{Relationships between the Koopman, Kolmogorov, and Schrödinger operators for a drift-diffusion process of the form $ \mathrm{d}X_t = -\nabla V(X_t) \ts \mathrm{d}t + \sqrt{2 \beta^{-1}} \ts \mathrm{d}B_t $. Here, $ \rho_0 $ denotes the invariant density, i.e., $ \mathcal{L}^* \rho_0 = 0 $. In our setting, the transformation of the Schrödinger operator requires a strictly positive real-valued ground state $\psi_0$.}
    \label{fig:Relationships}
\end{figure}

Although we focused mainly on the Kolmogorov backward equation, the Fokker--Planck equation, and the Schrödinger equation, these methods can be applied to approximate other differential operators as well. An interesting open question is whether such algorithms can also be used for manifold learning. Some preliminary results were presented in Section~\ref{sec:Applications}, but a rigorous mathematical justification would require significant additional research. Analyzing connections with diffusion maps \cite{Coifman06} or generalizations thereof in detail could be a potential direction for future work.

Another interesting avenue for future research could be to improve the efficiency and stability of the presented algorithms. Exploiting properties of given kernels, it might be possible to speed up computations significantly. The definition of a cutoff radius for the kernel or considering only a certain number of neighbors of data points, for instance, would---for suitable problems---result in sparse matrices. Moreover, the results sensitively depend on the hyperparameters such as the bandwidth of the Gaussian kernel. If the bandwidth is too small, this leads to overfitting and noisy eigenfunctions. If it is, on the other hand, too large, then the kernel is not able to accurately capture the properties of the dynamical system anymore. As a result, the Gram matrix $ G_0 $ has (numerically) essentially a low rank structure and we obtain many zero eigenvalues. The question is then how to efficiently compute the smallest nonzero eigenvalues and corresponding eigenvectors.

Potential solutions for the hyperparameter-tuning problem are techniques based on cross-validation \cite{MP15} or so-called kernel flows \cite{Owhadi19}. By defining an optimization problem for the parameters of the kernel, e.g., based on a variational principle \cite{WuNo17}, gradient descent methods can help find suitable parameter values.

\section*{Acknowledgements}

S.~K.\ was funded by Deutsche Forschungsgemeinschaft (DFG) through grant CRC 1114 (Scaling Cascades in Complex Systems, project ID: 235221301) and through Germany's Excellence Strategy (MATH\texttt{+}: The Berlin Mathematics Research Center, EXC-2046/1, project ID: 390685689). B.~H.\ thanks the European Commission for funding through the Marie Curie fellowships scheme. We would like to thank Luigi Delle Site for many helpful discussions regarding quantum chemistry and Amel Durakovic for useful discussions on the correspondence between the Schrödinger equation and complex Langevin dynamics.

{\small
\bibliographystyle{unsrturl}
\bibliography{kgEDMD}
}

\appendix

\section{Proofs}
\label{sec:Proofs}

\begin{proof}[Proof of Lemma~\ref{lem:KBE_SE}]
The unitary transformation is $\mathcal{H} = -e^{-\frac{1}{2}F} \mathcal{L} \left(e^{\frac{1}{2}F} \cdot \right)$. We obtain
\begin{align*}
\mathcal{H}f &= e^{-\frac{1}{2}F} \mathcal{S} \left(e^{\frac{1}{2}F} f \right) + e^{-\frac{1}{2}F} J \cdot \left(e^{\frac{1}{2}F} \nabla f + \frac{1}{2} e^{\frac{1}{2}F} f \nabla F\right) \\
&= e^{-\frac{1}{2}F} \mathcal{S} \left(e^{\frac{1}{2}F} f \right) + J \cdot \nabla f + \frac{1}{2} J \cdot \nabla F f,
\end{align*}
which establishes the first-order term in $\mathcal{H}$ and the third term in the definition of $W$. For the symmetric part, we find that
\begin{align*}
e^{-\frac{1}{2}F} \mathcal{S} \left(e^{\frac{1}{2}F} f \right) &= -\frac{1}{2}e^{\frac{1}{2}F} \nabla \cdot (e^{-F} a \nabla (e^{\frac{1}{2}F} f)) \\
&= -\frac{1}{2}e^{\frac{1}{2}F} \nabla \cdot (e^{-\frac{1}{2}F} a [\nabla f + \frac{1}{2}\nabla F f]) \\
&= -\frac{1}{2} \nabla \cdot ( a \nabla f) + \frac{1}{4} \nabla F^\top a \nabla f - \frac{1}{4}e^{\frac{1}{2}F} \nabla \cdot (e^{-\frac{1}{2}F} f a \nabla F),
\end{align*}
which establishes the second-order term in the definition of $\mathcal{H}$. Expanding the third term above, we get
\begin{align*}
-\frac{1}{4}e^{\frac{1}{2}F} \nabla \cdot (e^{-\frac{1}{2}F} f a \nabla F) &= -\frac{1}{4} \nabla \cdot (a \nabla F) f - \frac{1}{4} \nabla f^\top a \nabla F + \frac{1}{8} \nabla F^\top a \nabla F f,
\end{align*}
which cancels out the second term of the previous equation and establishes the remaining terms for $W$.

For the converse direction, we first translate the eigenvalue equation for $\psi_0$ into an equation for $\eta$:
\begin{align*}
0 &= (\mathcal{H} - E_0)\psi_0 = -\frac{1}{2}\nabla\cdot \left( a\nabla e^{-\eta} \right) + J \cdot \nabla e^{-\eta} + (W - E_0)e^{-\eta} \\
&= -\frac{1}{2}\nabla \cdot \left( -e^{-\eta} a \nabla \eta \right) - J \cdot \nabla \eta e^{-\eta} + (W - E_0)e^{-\eta} \\
&= e^{-\eta} \left[\frac{1}{2}\nabla \cdot \left( a \nabla \eta \right) - \frac{1}{2}\nabla \eta^\top a \nabla \eta - J \cdot \nabla \eta + W - E_0 \right],
\end{align*}
implying that the term in brackets is also vanishing. Now, we define the negative generator by the transformation $-\mathcal{L} = e^\eta\left[\mathcal{H} - E_0 \right] (e^{-\eta} \cdot)$. Expanding the action of $-\mathcal{L}$, we find
\begin{align*}
-\mathcal{L}f &= e^\eta \left[-\frac{1}{2}\nabla\cdot \left( a\nabla(e^{-\eta}f) \right) + J \cdot \nabla (e^{-\eta}f) + (W - E_0)(e^{-\eta}f) \right] \\
&= e^\eta \left[-\frac{1}{2}\nabla\cdot \left( e^{-\eta} a[ \nabla f - \nabla \eta f] \right) + e^{-\eta} J \cdot \nabla f + (W - E_0 - J \cdot \nabla \eta)(e^{-\eta}f) \right] \\
&= -\frac{1}{2}  \nabla\cdot \left( a \nabla f \right) + \frac{1}{2}\nabla \eta^\top a \nabla f + \frac{1}{2} \nabla \cdot (a\nabla \eta) f + \frac{1}{2} \nabla \eta^\top a \nabla f - \frac{1}{2}\nabla \eta^\top a \nabla \eta f + \\
& \quad J \cdot \nabla f + (W - E_0 - J \cdot \nabla \eta)f \\
&= -\frac{1}{2}  \nabla\cdot \left( a \nabla f \right) + \nabla \eta^\top a \nabla f + J \cdot \nabla f \\
&= -\frac{1}{2}e^{2\eta} \nabla \cdot \left(e^{-2\eta} a \nabla f\right) +  J \cdot \nabla f. \qedhere
\end{align*}
\end{proof}

\begin{proof}[Proof of Lemma~\ref{lem:RKHS Galerkin}]
We only show the proof for $\mathcal{T}_\mathbb{H}$. Similar to the argument in \cite{MUANDET2017}, $\mathcal{T}_{\mathbb{H}}$ is a bounded linear operator on $\mathbb{H}$ because of
\begin{align*}
& \bigg\|\int \phi(x) \otimes \left[ - \frac{1}{2}\sum_{i=1}^d \sum_{j=1}^d a_{ij}(x) D^{e_i + e_j} \phi(x) \right] \ts \mathrm{d} \mu(x) \\
&\quad + \int \phi(x) \otimes \left[\sum_{i = 1}^d \left(J_i(x) - \frac{1}{2} e^{F(x)} \nabla \cdot (e^{-F(x)}a_{:, i}(x))\right) D^{e_i}\phi(x) \right] \ts \mathrm{d} \mu(x) \\
    & \quad + \int  W(x) \phi(x) \otimes \phi(x) \ts \mathrm{d} \mu(x) \bigg\|_{HS} \\
&\leq \frac{1}{2} \sum_{i=1}^d \sum_{j=1}^d \int |a_{ij}(x)| \|D^{e_i + e_j} \phi(x)\|_{\mathbb{H}} \|\phi(x)\|_\mathbb{H}  \ts \mathrm{d} \mu(x) \\
& \quad + \sum_{i=1}^d \int \left(|J_i(x)| + \frac{1}{2} e^{F(x)} |\nabla \cdot (e^{-F(x)}a_{:, i}(x))| \right) \|D^{e_i}\phi(x)\|_\mathbb{H} \|\phi(x)\|_\mathbb{H} \ts \mathrm{d} \mu(x) \\
&\quad + \int  |W(x)| \|\phi(x)\|_\mathbb{H} \|\phi(x)\|_\mathbb{H}  \ts \mathrm{d} \mu(x) \\
&< \infty.
\end{align*}
Using the derivative reproducing property, we obtain
\begin{align*}
    \innerprod{\mathcal{T}f}{g}_{\mu}
        &= \int (\mathcal{T}f)(x) \ts g(x) \ts \mathrm{d} \mu(x) \hfill \\
        &= \int \left[- \frac{1}{2}\sum_{i=1}^d \sum_{j=1}^d a_{ij}(x) \frac{\partial^2 f}{\partial x_i \partial x_j}(x)\right] g(x) \ts \mathrm{d} \mu(x) \\
    & \quad + \int \left[\sum_{i=1}^d \left(J_i(x) - \frac{1}{2} e^{F(x)} \nabla \cdot (e^{-F(x)}a_{:, i}(x))\right) \frac{\partial f}{\partial x_i} \right] g(x) \ts \mathrm{d} \mu(x) \\
    &\quad +  \int W(x) f(x)g(x) \ts \mathrm{d} \mu(x) \hfill \\
    &= \int \left[- \frac{1}{2}\sum_{i=1}^d \sum_{j=1}^d a_{ij}(x) \innerprod{D^{e_i + e_j}\phi(x)}{f}_\mathbb{H}\right] \innerprod{\phi(x)}{g}_\mathbb{H} \ts \mathrm{d} \mu(x) \\
    & \quad + \int \left[\sum_{i=1}^d \left(J_i(x) - \frac{1}{2} e^{F(x)} \nabla \cdot (e^{-F(x)}a_{:, i}(x))\right) \innerprod{D^{e_i}\phi(x)}{f}_\mathbb{H} \right] \innerprod{\phi(x)}{g}_\mathbb{H} \ts \mathrm{d} \mu(x) \\
    &\quad +  \int W(x) \innerprod{\phi(x)}{f}_\mathbb{H} \innerprod{\phi(x)}{g}_\mathbb{H} \ts \mathrm{d} \mu(x) \hfill \\
    &= - \int \left[ \frac{1}{2} \sum_{i=1}^d \sum_{j=1}^d a_{ij}(x) \innerprod{D^{e_i+e_j}\phi(x)\otimes \phi(x)}{f\otimes g}_{\mathbb{H}\otimes \mathbb{H}} \right] \ts \mathrm{d} \mu(x) \\
    & \quad + \int \left[\sum_{i=1}^d \left(J_i(x) - \frac{1}{2} e^{F(x)} \nabla \cdot (e^{-F(x)}a_{:, i}(x))\right) \innerprod{D^{e_i}\phi(x)\otimes \phi(x)}{f\otimes g}_{\mathbb{H}\otimes \mathbb{H}} \right] \ts \mathrm{d} \mu(x) \\
    &\quad +  \int W(x) \innerprod{\phi(x)\otimes \phi(x)}{f\otimes g}_{\mathbb{H}\otimes \mathbb{H}} \ts \mathrm{d} \mu(x) \hfill \\
        &= \innerprod{\mathcal{T}_{\mathbb{H}} f}{g}_\mathbb{H}.
\end{align*}
The same argument can be used to prove the statement about the symmetric case.
\end{proof}

\begin{proof}[Proof of Lemma~\ref{lem:EVP}]
Let $ f = \Phi \ts u $ and $ g = \Phi \ts v $. Then
\begin{align*}
    \widehat{\mathcal{S}}_\mathbb{H}(f, g)
        &= \innerprod{ \left[\frac{1}{M} \sum_{m=1}^M \phi(x_m) \otimes \phi(x_m) \right] \sum_{r=1}^M u_r \ts \phi(x_r)}{\sum_{s=1}^M v_s \ts \phi(x_s)}_\mathbb{H} \\
        &= \frac{1}{M} \sum_{m=1}^M \sum_{r=1}^M \sum_{s=1}^M u_r \ts v_s  \innerprod{\phi(x_m)}{\phi(x_r)}_\mathbb{H} \innerprod{\phi(x_m)}{\phi(x_s)}_\mathbb{H} \\
        &= \frac{1}{M} \sum_{m=1}^M \sum_{r=1}^M \sum_{s=1}^M u_r \ts v_s  \ts k(x_m, x_r) \ts k(x_m, x_s) \\
        &= \frac{1}{M} \innerprod{G_{0} \ts u}{G_{0} \ts v}.
\end{align*}
Similarly,
\begin{align*}
    \widehat{\mathcal{Q}}_\mathbb{H}(f, g)
        &= \innerprod{ \left[\frac{1}{M} \sum_{m=1}^M \phi(x_m) \otimes \mathrm{d}\phi(x_m) \right] \sum_{r=1}^M u_r \ts \phi(x_r)}{\sum_{s=1}^M v_s \ts \phi(x_s)}_\mathbb{H} \\
        &= \frac{1}{M} \sum_{m=1}^M \sum_{r=1}^M \sum_{s=1}^M u_r \ts v_s  \innerprod{\mathrm{d}\phi(x_m)}{\phi(x_r)}_\mathbb{H} \innerprod{\phi(x_m)}{\phi(x_s)}_\mathbb{H} \\
        &= \frac{1}{M} \innerprod{G_{2} \ts u}{G_{0} \ts v}.
\end{align*}
If the kernel functions at the training points are linearly independent, then $ G_{0} $ is invertible, and it suffices to compute eigenvectors $ u $ of the generalized matrix eigenvalue problem $ G_{2} \ts u = \lambda \ts G_{0} \ts u $. In the symmetric case, the expression for the quadratic form $ \widehat{\mathcal{Q}}_\mathbb{H}(f, g)$ changes to
\begin{align*}
    \widehat{\mathcal{Q}}_\mathbb{H}(f, g)
        &= \frac{1}{2 \ts M} \sum_{l=1}^d \sum_{m=1}^M \sum_{r=1}^M \sum_{s=1}^M u_r \ts v_s \left(\sigma_{l}(x_m)^\top \nabla k(x_m, x_r)\right) \left(\sigma_{l}(x_m)^\top \nabla k(x_m, x_s)\right) \\
        &= \frac{1}{2 \ts M} \sum_{l=1}^d \innerprod{G_{1}^{(l)} \ts u}{G_{1}^{(l)} \ts v}. \qedhere
\end{align*}
\end{proof}

\end{document}